\newcommand{\kommentar}[1]{}
\newcommand{\C}{\mathbb{C}}
\newcommand{\R}{\mathbb{R}}
\newcommand{\Q}{\mathbb{Q}}
\newcommand{\N}{\mathbb{N}}
\newcommand{\Z}{\mathbb{Z}}
\newcommand{\HH}{\mathbb{H}}
\newcommand{\tr}{\mathrm{tr\;}}
\newtheorem{thm}{Theorem}
\newtheorem{defn}[thm]{Definition}
\newtheorem{prop}[thm]{Proposition}
\newtheorem{lem}[thm]{Lemma}
\newtheorem{conj}[thm]{Conjecture}
\title{Counting Salem numbers of arithmetic hyperbolic $3$--orbifolds}
\author{Mikhail Belolipetsky}
\address{IMPA, Estrada Dona Castorina, 110, 22460-320 Rio de Janeiro, Brazil}
\email{mbel@impa.br}
\author{Matilde Lal\'in}
\address{Universite de Montr\'eal, Pavillon Andr\'e-Aisenstadt, D\'ept. de math\'ematiques et de statistique, CP 6128, succ. Centre-ville Montr\'eal, Qu\'ebec, H3C 3J7, Canada}
\email{mlalin@dms.umontreal.ca}
\author{Plinio G. P. Murillo}
\address{Universidade Federal Fluminense, Instituto de Matem\'atica e Estat\'istica\\
Rua Prof. Marcos Waldemar de Freitas Reis, S/n, Bloco H, Campus do Gragoat\~a, 24210-201. Niter\'oi - RJ, Brazil}
\email{pliniom@id.uff.br}
\author{Lola Thompson}
\address{Mathematisch Instituut, Universiteit Utrecht, Budapestlaan 6, 3584 CD Utrecht, The Netherlands}
\email{l.thompson@uu.nl}
\subjclass[2010]{11R06, 20H05, 20H10}
\keywords{arithmetic 3-orbifold; closed geodesic; Salem number}
\begin{document}

\begin{abstract}
It is known that the lengths of closed geodesics of an arithmetic hyperbolic orbifold are related to Salem numbers. We initiate a quantitative study of this phenomenon. We show that any non-compact arithmetic $3$-dimensional orbifold defines $c Q^{1/2} + O(Q^{1/4})$ square-rootable Salem numbers of degree $4$ which are less than or equal to $Q$. This quantity can be compared to the total number of such Salem numbers, which is shown to be asymptotic to $\frac{4}{3}Q^{3/2}+O(Q)$. Assuming the gap conjecture of Marklof, we can extend these results to compact arithmetic $3$-orbifolds.  As an application, we obtain lower bounds for the strong exponential growth of mean multiplicities in the geodesic spectrum of non-compact even dimensional arithmetic orbifolds. Previously, such lower bounds had only been obtained in dimensions $2$ and $3$. 
\end{abstract}

\maketitle

\section{Introduction}

A \emph{Salem number} is a real algebraic integer $\lambda > 1$ such that all of its Galois conjugates except $\lambda^{-1}$ have absolute value equal to $1$. Salem numbers appear in many areas of mathematics including algebra, geometry, dynamical systems, and number theory. They are closely related to the celebrated Lehmer's problem about the smallest Mahler measure of a non-cyclotomic polynomial. We refer to \cite{Smyth-survey} for a survey of research on Salem numbers.   

It has been known for some time that the exponential lengths of the closed geodesics of an arithmetic hyperbolic $n$-dimensional manifold or orbifold are given by Salem numbers. For $n = 2$ and $3$ this relation is described in the book by C.~Maclachlan and A.~Reid \cite[Chapter 12]{MR-book}. More recently, it was elaborated upon and generalized to higher dimensions by V.~Emery,  J.~Ratcliffe, and S.~Tschantz in \cite{EmeryRatcliffeTschantz}. In particular, their Theorem~1.1 implies that, for a non-compact arithmetic  hyperbolic $n$-orbifold $\mathcal{O}$, a closed geodesic of length $\ell$ corresponds to a Salem number $\lambda = e^\ell$ if the dimension $n$ is even, and to a so called square-rootable Salem number $\lambda = e^{2\ell}$ if $n$ is odd. The degrees of these Salem numbers satisfy $\deg(\lambda) \le n+1$. A natural question arises: 
\emph{
What proportion of Salem numbers of a given degree are associated to a fixed orbifold $\mathcal{O}$?
}

To this end, let us recall some results about the distribution of algebraic integers. This field has a long history, so we will mention only the more recent results which are relevant to our work. In a beautiful paper \cite{Thurston-entropy}, W.~Thurston, motivated by the study of entropy of one-dimensional dynamical systems, encountered limiting distributions of conjugates of Perron numbers, a class which includes Salem numbers as a subset. His experiments led to a set of interesting problems and conjectures, some of which were successfully resolved by F.~Calegari and Z.~Huang in \cite{CalegariHuang}. Later on, some ideas from their approach helped F.~G\"otze and A.~Gusakova  to compute the asymptotic growth of Salem numbers in \cite{GoetzeGusakova}. The precise form of their result is given in Theorem~\ref{thm:GG}. It is remarkable that this result was established only very recently, as it allows us to play the asymptotic formula against the distribution of closed geodesics of an arithmetic $n$-orbifold. We also come up with a related question about the distribution of square-rootable Salem numbers. We were able to answer these questions in the first non-trivial case, when the degree of the Salem numbers is $4$ and the corresponding dimension of the arithmetic orbifolds is $3$. For higher even dimensions, the interplay between counting Salem numbers and the prime geodesic theorems (\cite{Margulis}, \cite{GanWar}) allows us to prove lower bounds for the strong exponential growth of mean multiplicities in the geodesic spectrum.

%More precisely, let 
%\[\mathrm{Sal}_m(Q):= \{\alpha\in \mathrm{Sal}_m\,:\alpha \leq Q\},\]
%where $\mathrm{Sal}_m$ denotes the set of all Salem numbers of degree $2(m+1)$ (note that the degree of a Salem number is always even). It was shown in \cite{GoetzeGusakova} that $\# \mathrm{Sal}_m(Q)=\omega_m Q^{m+1} +O(Q^m),$ with an explicit positive constant $\omega_m$ (see Section~\ref{sec2} for more details). 

The first result of this paper is the following theorem:

\newpage

\begin{thm}\label{thm1}\
\begin{itemize}
\item[A.] Let $\mathcal{O}_D$ be a non-compact arithmetic hyperbolic $3$-orbifold associated to a Bianchi group $\Gamma_D = \mathrm{PSL}(2, \mathfrak{o}_K)$, where $\mathfrak{o}_K$ is the ring of integers of an imaginary quadratic number field $K=\Q(\sqrt{-D})$, and $D$ is a square-free positive integer. Then $\mathcal{O}_D$ generates 
\[c Q^{1/2} + O(Q^{1/4})\]
square-rootable Salem numbers of degree $4$ which are less or equal to $Q$, where $c = \frac{\pi}{4\sqrt{D}}$ if $D \equiv 1, 2\ \mathrm{mod}\ 4$ and $c = \frac{\pi}{2\sqrt{D}}$ if $D \equiv 3\ \mathrm{mod}\ 4$.
\item[B.] The number of Salem numbers of degree $4$ that are square-rootable over $\Q$ and less than or equal to $Q$ is 
\[\frac{4}{3}Q^{3/2}+O(Q).\]
\end{itemize}
\end{thm}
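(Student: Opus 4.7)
The plan is to reduce both parts to planar lattice-point counts parametrized by the data $(A,B)$ of the Salem polynomial $(x^2-Ax+1)(x^2-Bx+1)$, where $A>2>B>-2$ and $a=A+B$, $b=AB+2$ lie in $\Z$.

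\textbf{Part A.} Each loxodromic element $\gamma\in\Gamma_D$ has trace $\tau=\tr\gamma\in\mathfrak{o}_K$ and eigenvalues $\mu,\mu^{-1}$ with $\mu+\mu^{-1}=\tau$. The translation length $\ell$ satisfies $|\mu|^2=e^{\ell}$, so the associated Salem number is $\lambda=e^{2\ell}$ and $\sqrt\lambda=\mu\bar\mu$; a direct calculation (from $A_1+B_1=(\mu+\mu^{-1})(\bar\mu+\bar\mu^{-1})$ and $A_1B_1=\mu^2+\mu^{-2}+\bar\mu^2+\bar\mu^{-2}$) shows that its minimal polynomial factors as $(x^2-A_1x+1)(x^2-B_1x+1)$ with
\[
A_1+B_1=|\tau|^2\in\Z,\qquad A_1B_1=\tau^2+\bar\tau^2-4\in\Z,
\]
so $A_1=(|\tau|^2+|\tau^2-4|)/2$. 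The condition $\lambda\le Q$ becomes $A_1\le Q^{1/2}+O(Q^{-1/2})$, and the level set $\{A_1(\tau)=A\}$ in the $\tau$-plane is the ellipse $(\re\tau)^2/(A+2)+(\im\tau)^2/(A-2)=1$. Two traces produce the same Salem number iff they lie on the same ellipse, and generically this fiber is the Klein four-group orbit $\{\pm\tau,\pm\bar\tau\}$. A standard lattice-point bound for smooth convex regions (the ellipse has semi-axes $\sim Q^{1/4}$ and perimeter $\sim Q^{1/4}$) yields
\[
\#\{\tau\in\mathfrak{o}_K:A_1(\tau)\le Q^{1/2}\}=\frac{\pi\,Q^{1/2}}{\operatorname{covol}(\mathfrak{o}_K)}+O(Q^{1/4}),
\]
and dividing by $4$ gives $c=\pi/(4\sqrt D)$ or $\pi/(2\sqrt D)$ according as $\operatorname{covol}(\mathfrak{o}_K)$ equals $\sqrt D$ (for $D\equiv1,2\bmod 4$) or $\sqrt D/2$ (for $D\equiv3\bmod 4$).

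\textbf{Part B.} A degree-$4$ Salem number $\lambda$ is square-rootable over $\Q$ iff $(A+2)(B+2)=2a+b+2$ is a perfect square $k^2$ with $k\in\Z_{\ge0}$; this is the Diophantine condition extracted from the required factorization of $f(x^2)$. For each such $k$, the Salem pairs $(a,b)\in\Z^2$ with $\lambda\le Q$ lie on the line $2a+b+2=k^2$, and the Salem conditions together with $\lambda\le Q$ force $a\in(k^2/4,\,Q+O(1))$, contributing $Q-k^2/4+O(1)$ integer values of $a$. Summing,
\[
\sum_{k=1}^{\lfloor2\sqrt Q\rfloor}\!\left(Q-\frac{k^2}{4}\right)=2Q^{3/2}-\frac{(2\sqrt Q)^3}{12}+O(Q)=\frac{4}{3}Q^{3/2}+O(Q).
\]

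The hardest step is controlling the $O(Q^{1/4})$ error in Part A: beyond the lattice-point estimate for the ellipse itself, one must verify that ``accidental'' coincidences $A_1(\tau)=A_1(\tau')$ with $\tau'$ outside the Klein four-orbit of $\tau$ contribute only $O(Q^{1/4})$ in total, which should follow from standard bounds on the number of representations of integers by binary quadratic forms. Part B then becomes a routine summation once the algebraic characterization of square-rootability is in place.
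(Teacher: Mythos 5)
Your Part~B argument is essentially the paper's argument with the sum reorganized over $k$ rather than over $a$: the paper derives the constraints $b = k^2+2a-2$, $k^2 < -4a$ (in its sign convention $a=-(A+B)$), $0 < -a < Q+3$, and then sums over $a$. Your sum over $k$ of $Q - k^2/4 + O(1)$ yields the same $\frac{4}{3}Q^{3/2}+O(Q)$. You should, however, explicitly cite the square-rootability criterion (this is exactly \cite[Lemma 8.2(2)]{EmeryRatcliffeTschantz}, namely $p(-1)$ is a square) rather than merely asserting it, verify --- as the paper does --- that the remaining Salem inequalities (such as $a^2>4b-8$) are consequences of the ones you use, and note that the reducible factorizations $p(x)=(x^2+\alpha x+1)(x^2+\beta x+1)$ with $|\alpha|<2$ contribute only $O(Q)$. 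These are omissions of routine detail, not errors.

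Your Part~A, by contrast, takes a genuinely different route. The paper simply cites Marklof's Theorem~4(b) for the count $\mathcal{N}_r(\ell)= cQ^{1/2}+O(Q^{1/4})$ of \emph{distinct} geodesic lengths and then appeals to \cite[Theorem~1.6, Lemmas~7.2, 7.4]{EmeryRatcliffeTschantz} to identify these with square-rootable degree-$4$ Salem numbers. You instead re-derive Marklof's count from scratch by counting traces $\tau\in\mathfrak{o}_K$ in the elliptic region $A_1(\tau)\le Q^{1/2}$ and dividing by $4$. Your algebra relating $(A_1,B_1)$ to $(|\tau|^2,\ \tau^2+\bar\tau^2-4)$ and your identification of the level sets as ellipses $u^2/(A+2)+v^2/(A-2)=1$ are correct, and the main-term covolume computation matches the claimed constants. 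However, two points need shoring up. First, the worry you flag as "the hardest step" --- accidental coincidences $A_1(\tau)=A_1(\tau')$ outside the Klein-four orbit --- is in fact a non-issue when the degree-$4$ polynomial is irreducible: in that case $A_1=\sqrt\lambda+\sqrt\lambda^{-1}$ generates a quadratic field whose minimal polynomial over $\Q$ \emph{is} $r_\tau(y)=y^2-(A_1+B_1)y+A_1B_1$, so $A_1$ determines $(A_1+B_1,A_1B_1)=(|\tau|^2,\ 2\re\tau^2-4)$ and hence the Klein-four orbit. Second, and more seriously, what you do \emph{not} address is the genuine hard step: showing that the traces giving reducible or lower-degree polynomials (real $\tau$, imaginary $\tau$, and those $\tau$ for which $|\tau^2-4|^2$ is a perfect square so that $A_1\in\Z$) contribute only $O(Q^{1/4})$ to the lattice count. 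The real and imaginary axes are easy, but the reducible locus requires a real argument, and this is precisely the content of Marklof's careful ellipse analysis that the paper leans on. You also implicitly use (via $\sqrt\lambda=\mu\bar\mu$) the square-rootability of the resulting $\lambda$, which should be attributed to \cite[Theorem~1.6]{EmeryRatcliffeTschantz}. What your approach buys, if completed, is a self-contained proof not requiring Marklof's theorem as a black box; what the paper's approach buys is brevity and rigour by delegating exactly these degenerate-locus estimates to Marklof.
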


This theorem together with a special case of the theorem of G\"otze and Gusakova implies that, in the logarithmic scale, a given $3$-orbifold $\mathcal{O}_D$ generates asymptotically $1/4$ of all Salem numbers of degree $4$ and asymptotically $1/3$ of the square-rootable Salem numbers of degree $4$.

The proof of the first part of the theorem uses the work of J.~Marklof on multiplicities in length spectra of arithmetic hyperbolic $3$--orbifolds \cite{Marklof}. For proving part~B we take advantage of some special properties of Salem numbers of degree 4.  Extending these results to higher degrees would require an extension of Marklof's length spectrum asymptotic to arithmetic orbifolds of dimension greater than $3$ and an analogue of the G\"otze--Gusakova theorem for square-rootable Salem numbers of higher degree. 

While we do not handle the higher-dimensional case in this paper, we are able to extend Theorem~\ref{thm1} to compact arithmetic $3$-orbifolds and associated Salem numbers of degree $4d$, $d\ge 1$, square-rootable over an intermediate field $L$ of degree $d$ --- see Theorem~\ref{thm2}. Finally, we show how to apply our methods to prove lower bounds for the strong exponential growth of mean multiplicities in the geodesic spectrum of non-compact even dimensional arithmetic orbifolds. Namely, we prove:

\begin{prop} \label{prop-mult}
Let $\mathcal{O}$ be a non-compact arithmetic hyperbolic orbifold of even dimension $n \ge 4$. Then the mean multiplicities in the length spectrum of $\mathcal{O}$ have a strong exponential growth rate of at least 
$$ \langle g(\ell) \rangle \sim c\frac{e^{(\frac{n}2-1)\ell}}{\ell}, \quad \ell\to\infty,$$
where $c$ is a positive constant. 
\end{prop}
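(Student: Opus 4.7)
The plan is to combine two classical asymptotics: the prime geodesic theorem, which controls the total number of closed geodesics, and the G\"otze--Gusakova count of Salem numbers, which controls the number of distinct lengths. The mean multiplicity $\langle g(\ell)\rangle$ is, in a suitable window sense, the ratio of these two quantities, and that ratio will have the claimed exponential growth.

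For the numerator, the prime geodesic theorem of Margulis (extended to the finite-volume higher-dimensional setting by Gangolli--Warner, as cited in the introduction) gives $\pi_{\mathcal{O}}(L) \sim e^{(n-1)L}/((n-1)L)$ as $L\to\infty$, where $\pi_{\mathcal{O}}(L)$ counts primitive closed geodesics of length $\le L$. Standard differencing then shows that the number of closed geodesics of length in a unit window $[L,L+1]$ (counted with multiplicity, iterates contributing a negligible amount) is of order $e^{(n-1)L}/L$.

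For the denominator, each closed geodesic on $\mathcal{O}$ of length $\ell$ corresponds, by the Emery--Ratcliffe--Tschantz theorem referenced in the introduction, to a Salem number $\lambda = e^{\ell}$ of degree $\le n+1$; since Salem numbers have even degree and $n$ is even, this is in fact $\le n$. Invoking the G\"otze--Gusakova asymptotic for each fixed degree $d\le n$ and summing, the total number of Salem numbers of degree at most $n$ with value at most $Q$ is $O(Q^{n/2})$, with the top degree $d=n$ dominating. Consequently, the number of distinct lengths of closed geodesics in the window $[L,L+1]$ is $O(e^{(n/2)L})$.

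Dividing the two estimates, the mean multiplicity satisfies
\[
\langle g(L)\rangle \;\ge\; \frac{c_1\, e^{(n-1)L}/L}{c_2\, e^{(n/2)L}} \;=\; c\,\frac{e^{(n/2-1)L}}{L},
\]
which is genuinely exponential since $n\ge 4$ forces $n/2-1\ge 1>0$. The principal technical obstacles are invoking the G\"otze--Gusakova asymptotic uniformly across all admissible degrees $\le n$, and translating the window inequality into the ``strong'' exponential growth statement used in the paper; both should follow routinely from the explicit shape of the Salem count. We only need an upper bound on the number of distinct Salem numbers, so any potential clustering of geodesic lengths can only improve the lower bound on the mean multiplicity.
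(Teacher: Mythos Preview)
Your proposal is correct and follows essentially the same argument as the paper: count closed geodesics via the prime geodesic theorem (Gangolli--Warner for the non-compact orbifold case), bound the number of distinct lengths by the G\"otze--Gusakova count of Salem numbers of degree $\le n$ via Emery--Ratcliffe--Tschantz, and divide. The paper's proof uses cumulative counts up to $\ell$ rather than your unit-window differencing, but this is a cosmetic difference and the logic is identical.
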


\noindent Results of this nature were previously known only for arithmetic orbifolds of dimensions $2$ and $3$.

The paper is organized as follows. In Section~2 we recall definitions and some properties of Salem numbers and arithmetic groups. In Section~3 we prove Theorem~\ref{thm1}. In Section~4 we prove Theorem~\ref{thm2}. In Section~5 we consider other dimensions, prove Proposition~\ref{prop-mult}, and discuss some open problems.

\section{Preliminaries}\label{sec:preliminaries}

\subsection{Salem numbers} A \emph{Salem number} is a real algebraic integer $\lambda > 1$ such that all of its Galois conjugates have absolute value less than
or equal to $1$, and at least one of them has absolute value equal to $1$. Let $\lambda'$ denote a Galois conjugate of the Salem number $\lambda$ with $|\lambda'| = 1$. Since $\lambda'$ and its complex conjugate $\overline{\lambda'} = (\lambda')^{-1}$ are Galois conjugates we conclude that the minimal polynomial $p_\lambda$ of a Salem number $\lambda$ is self-reciprocal, i.e., $p_\lambda(x) = x^{\deg(p_\lambda)} p_\lambda(x^{-1})$. This means that its coefficients form a palindromic sequence. Moreover, the polynomial $p_\lambda$ is of even degree $n = 2(m + 1)$ because otherwise $p_\lambda(-1) = - p_\lambda(-1) = 0$, which contradicts its irreducibility. Thus, all Galois conjugates of a Salem number $\lambda$ (except for $\lambda^{-1}$) have absolute value $1$ and lie on the unit circle in the complex plane. We used this property as the first definition of Salem numbers in the introduction. For convenience in this paper we will also allow Salem numbers to have degree $2$ (where $\lambda$ has $\lambda^{-1}$ as its only conjugate). 
%Note that in contrast to the usual assumption, e.g., as in \cite{Smyth-survey}, we will allow the degree of $\lambda$ to be equal to $2$ (which occurs exactly when $\lambda + \lambda^{-1} \in \Z$). 
We do this so that our definition of Salem numbers aligns with that used in \cite{EmeryRatcliffeTschantz}.

The celebrated Lehmer's problem asks about the existence of a smallest Salem number $\lambda > 1$ and gives the conjectural candidate $\lambda = 1.176\ldots$ of degree $10$ found by D. H.~Lehmer in 1933 \cite{Lehmer}. We refer to \cite{Smyth-survey} and the references therein for more about Lehmer's problem. In this paper we will be interested in a somewhat opposite question about how quickly the number of Salem numbers grows when their values tend to infinity.

We denote by $\mathrm{Sal}_m$ the set of all Salem numbers of degree $2(m+1)$ and let 
\[\mathrm{Sal}_m(Q):= \{\lambda\in \mathrm{Sal}_m\,:\lambda \leq Q\}.\]

It is not hard to find examples of Salem numbers of any even degree. One of the basic facts proved by Salem is that if $\lambda$ is a Salem number of degree $n$, then so is $\lambda^k$ for all $k \in \N$ (see \cite[Lemma 2]{Smyth-survey}). This implies that the counting function $\# \mathrm{Sal}_m(Q)$ grows at least as fast as $c \log Q$. However, the actual growth of Salem numbers is much faster. A precise result for their asymptotic growth was recently proved by G\"otze and Gusakova:

\begin{thm}[\cite{GoetzeGusakova}, Theorem 1.1] \label{thm:GG} For any positive integer $m$ we have 
\[\# \mathrm{Sal}_m(Q)=\omega_m Q^{m+1} +O(Q^m),\]
as $Q\rightarrow \infty$, where
 \[\omega_m:=\frac{2^{m(m+1)}}{m+1} \prod_{k=0}^{m-1} \frac{k!^2}{(2k+1)!}.\]
\end{thm}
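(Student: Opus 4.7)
The plan is to pass from Salem numbers to their \emph{trace polynomials} under the substitution $y=x+x^{-1}$. Because $p_\lambda$ is self-reciprocal of even degree $2(m+1)$, it factors uniquely as $p_\lambda(x)=x^{m+1}q_\lambda(x+x^{-1})$ with $q_\lambda\in\Z[y]$ monic and irreducible of degree $m+1$; the roots of $q_\lambda$ are $\mu_0:=\lambda+\lambda^{-1}>2$ together with the $m$ traces $\mu_j=2\cos\theta_j\in(-2,2)$ of the unimodular conjugate pairs $e^{\pm i\theta_j}$. Conversely, every monic irreducible $q\in\Z[y]$ of degree $m+1$ with exactly one root in $(2,\infty)$ and the remaining $m$ roots in $(-2,2)$ is the trace polynomial of a unique Salem number $\lambda=(\mu_0+\sqrt{\mu_0^2-4})/2$ of degree $2(m+1)$, and the bound $\lambda\le Q$ becomes $\mu_0\le Q+Q^{-1}$. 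The problem therefore reduces to enumerating such polynomials, modulo a reducible contribution that will be absorbed into the error.

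Writing $q(y)=y^{m+1}+a_1y^m+\cdots+a_{m+1}$, let $\Omega_Q\subset\R^{m+1}$ be the set of coefficient vectors producing the prescribed root configuration. Changing variables from coefficients to roots via the elementary symmetric polynomial map, whose Jacobian is $\prod_{i<j}|\mu_i-\mu_j|$, and dividing by $m!$ for the symmetry among $\mu_1,\dots,\mu_m$, yields
\[
\mathrm{vol}(\Omega_Q)=\frac{1}{m!}\int_2^{Q+Q^{-1}}\!d\mu_0\int_{(-2,2)^m}\prod_{0\le i<j\le m}|\mu_i-\mu_j|\,d\mu_1\cdots d\mu_m.
\]
For large $\mu_0$ one has $\prod_{j=1}^m(\mu_0-\mu_j)=\mu_0^m+O(\mu_0^{m-1})$, so the $\mu_0$-integration contributes $\tfrac{1}{m+1}Q^{m+1}+O(Q^m)$ while the inner factor is the Mehta integral $J_m:=\int_{[-2,2]^m}\prod_{i<j}|\mu_i-\mu_j|\,d\mu$. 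The rescaling $\mu_i=2u_i$ turns $J_m$ into $2^{m(m+1)}$ times a Selberg integral with parameters $\alpha=\beta=1$, $c=1/2$, which by the classical Selberg formula equals $m!\prod_{k=0}^{m-1}k!^2/(2k+1)!$; combining all factors gives precisely the constant $\omega_m$.

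The final step converts the volume into an integer lattice count. By a Davenport-type principle, $\#(\Omega_Q\cap\Z^{m+1})=\mathrm{vol}(\Omega_Q)+O(\mathrm{vol}(\partial\Omega_Q))$, and since $\Omega_Q$ is bounded in every coordinate except $a_1=-\sum\mu_i$, whose range has length $O(Q)$, the boundary contributes $O(Q^m)$. Reducible $q$'s split as $q=q_1q_2$ over $\Z[y]$; parametrising each factorisation type via Vandermonde on the appropriate lower-dimensional stratum bounds their number in $\Omega_Q$ by $O(Q^m)$, again absorbed into the error. The main technical obstacle will be maintaining the $O(Q^m)$ error uniformly near the boundary of $\Omega_Q$, where the Vandermonde factor degenerates (roots coalescing or approaching $\pm 2$) and the naive Davenport estimate deteriorates; this forces a partition of $\Omega_Q$ into a bulk region, handled by standard volume approximation, and a thin boundary layer controlled by a separate crude bound, in the spirit of Calegari--Huang \cite{CalegariHuang}.
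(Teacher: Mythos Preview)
The paper does not prove Theorem~\ref{thm:GG}; it is quoted from G\"otze--Gusakova \cite{GoetzeGusakova} as a background result. The only thing the present paper does in this direction is the remark in Section~3.3, where the authors recover the special case $m=1$ by elementary means: they parametrise degree-$4$ Salem polynomials $x^4+ax^3+bx^2+ax+1$ by the two integers $a,b$, translate the Salem root configuration into the explicit inequalities $-2a>2+b>2a$ and $0<-a<Q+3$, and count lattice points directly to obtain $2Q^2+O(Q)$, matching $\omega_1=2$.

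Your sketch, by contrast, addresses the full theorem for all $m$ via the trace-polynomial reduction, the Vandermonde/Selberg volume computation, and a Davenport-type lattice count. This is essentially the G\"otze--Gusakova strategy (and close in spirit to Calegari--Huang), not anything the present paper attempts. The outline is sound: the substitution $y=x+x^{-1}$, the Jacobian identification, the leading-term extraction in $\mu_0$, and the Selberg evaluation giving $J_m=2^{m(m+1)}m!\prod_{k=0}^{m-1}k!^2/(2k+1)!$ all check out and combine to $\omega_m$. The genuine work you correctly flag is controlling the $O(Q^m)$ error: the region $\Omega_Q$ is not convex and has boundary pieces where the Vandermonde degenerates, so a naive Davenport bound on the number of boundary-intersecting unit cubes is not immediate; one needs either the slicing/projection argument of \cite{GoetzeGusakova} or a decomposition into a bulk and thin boundary layers as you indicate. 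The reducibility count is straightforward as you say.

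In short: your proposal is a reasonable proof plan for the cited theorem, but it is not a comparison point for this paper, which proves nothing beyond the trivial $m=1$ instance.
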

We remark for future reference that $\omega_1=2$. 

%In this paper we will encounter a special class of square-rootable Salem numbers, which was first introduced in \cite{EmeryRatcliffeTschantz}. We defer the precise definition to Section~\ref{sec:proof B}. 

In this paper, we will encounter a special class of Salem numbers that are called \textit{square-rootable}.
\begin{defn}\label{defn-sr}
Let $\lambda$ be a Salem number, let $L$ be a subfield of $\Q(\lambda+\lambda^{-1})$, and let $p(x)$ be the minimal polynomial of $\lambda$ over $L$. We say that $\lambda$ is \emph{square-rootable} over $L$ via $\alpha$ if there exist a totally positive element $\alpha \in L$ and a monic palindromic polynomial $q(x)$, whose even degree coefficients are in $L$ and whose odd degree coefficients are in $\sqrt{\alpha}L$, such that $q(x)q(-x)=p(x^2)$. 
\end{defn}

The square-rootable Salem numbers were first defined by Emery, Ratclifffe, and Tschantz in \cite{EmeryRatcliffeTschantz}. They are of interest because they are associated to geodesic lengths of odd-dimensional arithmetic orbifolds.

\subsection{Arithmetic orbifolds}

Let us recall the definition of an arithmetic Kleinian group. Let $K$ be a number field with exactly one complex place, $\mathfrak{o}_K$ its ring of integers, and $A$ a quaternion algebra over $K$ ramified at all real places of $K$. Let $\mathfrak{D}$ be an $\mathfrak{o}_{K}$-order of $A$, and denote by $\mathfrak{D}^1$ its group of elements of norm $1$. % and let $N(\mathfrak{D}^1)$ be the normalizer of $\mathfrak{D}^1$ in $A$. 
Consider a $K$-embedding $\rho: A \hookrightarrow \mathrm{M}(2,\C)$ associated with the complex place of $K$. The group
\[ \Gamma_\mathfrak{D} = P\rho(\mathfrak{D}^1) < \mathrm{PSL}(2,\C), \]
where $P: \mathrm{SL}(2,\C) \to \mathrm{PSL}(2,\C)$ is the natural projection, is then a discrete finite covolume subgroup of $\mathrm{PSL}(2,\C)$. Following \cite{Marklof}, we will call $\Gamma_\mathfrak{D}$ an \emph{arithmetic quaternion group}.  A subgroup $\Gamma < \mathrm{PSL}(2,\C)$ which is commensurable with some such group $\Gamma_\mathfrak{D}$ is called an \emph{arithmetic Kleinian group}. The associated quotient space $\mathcal{O} = \HH^3/\Gamma$ is an \emph{arithmetic hyperbolic $3$-orbifold}.   

Arithmetic hyperbolic orbifolds can be compact or non-compact with cusps and finite volume. In dimension $3$ the non-compact orbifolds correspond to the arithmetic Kleinian groups which are commensurable with the Bianchi groups $\Gamma_D = \mathrm{PSL}(2, \mathfrak{o}_K)$, where $\mathfrak{o}_K$ is the ring of integers of an imaginary quadratic number field $K=\Q(\sqrt{-D})$ and $D$ is a square-free positive integer (cf. \cite[Theorem~8.2.3]{MR-book}).

An important subclass of arithmetic groups of hyperbolic isometries is defined by admissible quadratic forms. For this definition let $L$ be a totally real number field with ring of integers $\mathfrak{o}_L$, and let $f$ be a quadratic form of signature $(n,1)$ defined over $L$ such that, for every non-identity embedding $\sigma :L\to\R$, the form $f^\sigma$ is positive definite. The group $\Gamma = \mathrm{O}_0(f,\mathfrak{o}_L)$ of integral automorphisms of $f$ is a discrete subgroup of $\mathrm{O}_0(n,1)$, which is the full group of isometries of the hyperbolic $n$-space $\HH^n$ (the group $\mathrm{O}_0(n,1)$ is the subgroup of the orthogonal group $ \mathrm{O}(n,1)$ that preserves the upper cone in the vector model of $\HH^n$). Using reduction theory, one can show that such groups $\Gamma$ have finite covolume. The groups $\Gamma$ obtained in this way and subgroups of $\mathrm{Isom}(\HH^n)$ which are commensurable with them are called \emph{arithmetic subgroups of the simplest type}.

It is a well-known consequence of \cite{Weil1960} that every non-cocompact arithmetic $\Gamma$ in dimension $n \neq 7$ is commensurable with the group of units of a quadratic form over $\Q$. A more careful analysis implies that the same is true for $n = 7$ but we will not consider this case here. For the purpose of this paper we record a corollary that the Bianchi groups are arithmetic subgroups of the simplest type. Indeed, it is not hard to write down the corresponding quadratic forms for each $\Gamma_D$, which we leave as an exercise for the interested reader.

\section{Proof of Theorem \ref{thm1}}\label{sec2}

%Denote by $\mathrm{Sal}_m$ the set of all Salem numbers of degree $2(m+1)$. 
%Let 
%\[\mathrm{Sal}_m(Q):= \{\alpha\in \mathrm{Sal}_m\,:\alpha \leq Q\}.\]

\subsection{Proof of part A}\label{sec:proof A}
In \cite{EmeryRatcliffeTschantz}, V.~Emery, J.~Ratcliffe and S.~Tschantz obtained the following result.
\begin{thm}[cf. \cite{EmeryRatcliffeTschantz}, Theorem 1.6]\label{thm:ERT}
Let $\Gamma \subseteq \mathrm{Isom}(\HH^n)$ be an arithmetic lattice, with $n$ odd, of the simplest type defined over a totally real number field $L$. Let $\gamma$ be a hyperbolic element of $\Gamma$, and let $\lambda=e^{2\ell(\gamma)}$. Then $\lambda$ is a Salem number which is square-rootable over $L$.  
\end{thm}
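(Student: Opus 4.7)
I would realize $\gamma$ as an integral matrix $M\in\mathrm{O}_0(f,\mathfrak{o}_L)$ for an admissible quadratic form $f$ of signature $(n,1)$ over $L$, and extract both assertions from the spectral structure of $M$ together with its characteristic polynomial $P(x)\in\mathfrak{o}_L[x]$, which has degree $n+1$. At the identity embedding, hyperbolicity of $\gamma$ produces eigenvalues $\mu=e^{\ell(\gamma)}$, $\mu^{-1}$, and $n-1$ further values $\zeta_1,\ldots,\zeta_{n-1}$ lying on the unit circle (the latter coming from the orthogonal action on the positive-definite complement of the geodesic axis). At each non-identity real embedding $\sigma\colon L\to\R$, the form $f^\sigma$ is positive definite, so $M^\sigma$ lies in the compact group $\mathrm{O}(n+1)$; its eigenvalues, being algebraic integers on the unit circle, are roots of unity by Kronecker's theorem. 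Since the characteristic polynomial of $\gamma^2$ is obtained from $P$ by squaring the roots, $\lambda=\mu^2$ is an algebraic integer whose Galois conjugates over $\Q$ consist of $\lambda^{-1}$ together with values on the unit circle, which is the definition of a Salem number.

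To prove square-rootability I would take $\alpha:=\lambda+\lambda^{-1}+2=(\mu+\mu^{-1})^2\in L$ as the auxiliary element. Total positivity of $\alpha$ follows from the previous eigenvalue analysis: at the identity embedding $\alpha>0$ trivially, and at each non-identity embedding $\sigma$ we have $\alpha^\sigma=|\mu^\sigma+(\mu^\sigma)^{-1}|^2\geq 0$ because $\mu^\sigma$ lies on the unit circle, with strict positivity away from the marginal case $\mu^\sigma=\pm\ii$. Next, write $p(x)\in L[x]$ for the minimal polynomial of $\lambda$ over $L$ and enumerate its roots as $\lambda_1=\lambda,\lambda_2=\lambda^{-1},\lambda_3,\ldots,\lambda_d$; note that $d$ is always even, since reciprocal pairs of roots of $p$ are swapped by an $L$-automorphism and so partition the orbit. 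Choose square roots $\mu_i=\sqrt{\lambda_i}$ compatibly — with $\mu_1=\mu$ and the pairing $\mu_i\mu_{i'}=1$ for each reciprocal pair — and set $q(x):=\prod_{i=1}^d(x-\mu_i)$. A direct expansion gives $q(x)q(-x)=(-1)^d p(x^2)=p(x^2)$, and palindromicity of $q$ follows from the reciprocal pairing of the $\mu_i$.

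The principal technical obstacle, and the part I expect to be most delicate, is to verify the coefficient structure required by Definition~\ref{defn-sr}: that $e_k(\mu_1,\ldots,\mu_d)$ lies in $L$ when $k$ is even and in $\sqrt\alpha\,L$ when $k$ is odd. This reduces to a Galois-theoretic statement: the sign ambiguities in $\mu_i=\sqrt{\lambda_i}$ must be resolvable compatibly so that the resulting polynomial lives in $L(\sqrt\alpha)[x]$, and the nontrivial element of $\mathrm{Gal}(L(\sqrt\alpha)/L)$ must act on $\{\mu_i\}$ by the simultaneous sign flip $\mu_i\mapsto-\mu_i$. Once this is in place, $e_k$ is multiplied by $(-1)^k$ under this involution, so it lies in the $(-1)^k$-eigenspace of $L(\sqrt\alpha)$, which is $L$ for $k$ even and $\sqrt\alpha\,L$ for $k$ odd. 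Degenerate subcases (for example when $\alpha$ is already a square in $L$ or when some $\zeta_i=\pm 1$) can be disposed of separately without affecting the conclusion.
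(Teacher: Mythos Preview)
The paper does not give its own proof of this statement; it is quoted from \cite{EmeryRatcliffeTschantz} (their Theorem~1.6) and used as a black box in Section~\ref{sec:proof A}. So there is nothing to compare against, and your proposal has to stand on its own.

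Your overall architecture---realize $\gamma$ in $\mathrm{O}_0(f,\mathfrak{o}_L)$, read off the eigenvalue structure at each real place of $L$, then build $q$ from square roots of the roots of $p$---is the right shape, but two steps do not go through as written.

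A minor point first: invoking Kronecker's theorem to say that the eigenvalues of $M^\sigma$ are roots of unity is a misapplication. Kronecker requires \emph{all} Galois conjugates of the number to lie on the unit circle, whereas these eigenvalues are conjugate over $\Q$ to $\mu$, which does not. Fortunately you never use the root-of-unity conclusion; lying on the unit circle already suffices for the Salem property, so this slip is harmless.

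The real gap is your choice of $\alpha$. You take $\alpha=\lambda+\lambda^{-1}+2=(\mu+\mu^{-1})^2$ and assert $\alpha\in L$. But Definition~\ref{defn-sr} requires $\alpha\in L$, while $\lambda+\lambda^{-1}$ has degree $d/2$ over $L$ where $d=\deg_L p$; as soon as $d>2$ (which is the generic and interesting case---e.g.\ $L=\Q$, $d=4$ in this paper) your $\alpha$ is not in $L$. Concretely, for a Bianchi group realized in $\mathrm{O}_0(3,1)$ over $\Q$, the characteristic polynomial of $M$ has degree~$4$, so $\mu+\mu^{-1}$ is a root of a quadratic over $\Q$ and is typically irrational; then $(\mu+\mu^{-1})^2\notin\Q$ either. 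So the argument collapses at this point.

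The part you flag as ``the principal technical obstacle''---arranging the $\mu_i$ so that $q\in L(\sqrt\alpha)[x]$ with the required parity on coefficients---is also not carried out; you describe the desired Galois behaviour but give no mechanism producing it. The way Emery--Ratcliffe--Tschantz handle both issues at once is to work with the characteristic polynomial of $M$ itself rather than with abstractly chosen square roots: the eigenvalues of $M$ are already a canonical set of square roots of the eigenvalues of $M^2$, and $\det(xI-M)\in\mathfrak{o}_L[x]$ (suitably factored) furnishes $q$ directly, with $\alpha$ emerging from that factorization rather than being guessed in advance. Reorganizing your argument around $P(x)=\det(xI-M)$ would repair both gaps.
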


Recall from the previous section that in dimension $n=3$ the Bianchi groups correspond to arithmetic lattices of the simplest type defined over $L=\Q$. 

Now recall a result of Marklof on counting geodesic lengths in the spectrum of Bianchi orbifolds \cite{Marklof}. It is important for us that the lengths are counted without multiplicities, as it is well-known that multiplicities in the spectrum of arithmetic orbifolds can grow very rapidly. We have: 

\begin{thm}[cf. \cite{Marklof}, Theorem 4(b)]\label{thm:M}
Let $\Gamma_D= \mathrm{SL}(2, \mathfrak{o}_K)$, $K=\Q(\sqrt{-D})$, $D\in \Z_{>0}$ square-free. Then the number of distinct real lengths of closed geodesics less than or equal to $\ell$ in $\HH^3/\Gamma_D$ is given by 
\[\mathcal{N}_r(\ell)=\begin{cases}
                               \displaystyle{\frac{\pi}{4\sqrt{D}} e^\ell +O(e^{\ell/2})}, & D\equiv 1, 2\mod{4},\\\\
                                \displaystyle{\frac{\pi}{2\sqrt{D}} e^\ell +O(e^{\ell/2})}, & D\equiv 3\mod{4}.
                              \end{cases}\]
\end{thm}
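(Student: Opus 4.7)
The plan is to parametrize distinct real lengths of closed geodesics in $\HH^3/\Gamma_D$ by complex traces of loxodromic elements in $\mathfrak{o}_K$, identify the natural four-fold symmetry among traces that yield the same real length, and reduce the counting problem to a Gauss circle problem for the lattice $\mathfrak{o}_K\subset\C$.

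First I would recall the trace--length dictionary: if $\gamma\in\mathrm{SL}(2,\mathfrak{o}_K)$ is loxodromic with eigenvalues $e^{\pm(\ell+i\phi)/2}$, then $t:=\mathrm{tr}(\gamma)=2\cosh((\ell+i\phi)/2)\in\mathfrak{o}_K$. A short computation yields
\[|t|^2 = 2\cosh(\ell)+2\cos(\phi),\qquad \mathrm{Re}(t^2)-2 = 2\cosh(\ell)\cos(\phi),\]
so that $\cosh(\ell)$ and $\cos(\phi)$ are the two roots of $X^2 - (|t|^2/2)X + (\mathrm{Re}(t^2)-2)/2 = 0$, with $\cosh(\ell)$ being the larger one. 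The real length therefore depends on $t$ only through $(|t|^2, \mathrm{Re}(t^2))$, and this pair is invariant under the symmetries $t\mapsto\pm t$ and $t\mapsto\pm\bar t$. Moreover, every $t\in\mathfrak{o}_K$ with $|t|^2>4$ is the trace of an element of $\Gamma_D$ — for instance the matrix $\begin{pmatrix}0 & 1\\-1 & t\end{pmatrix}\in\mathrm{SL}(2,\mathfrak{o}_K)$ — so no realizability obstruction arises.

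Next I would translate the bound $\ell_t\le \ell$ into a bound on $|t|^2$. Since $|t|^2 - 2\cosh(\ell_t) = 2\cos(\phi_t)\in[-2,2]$, the set of traces with $\ell_t\le \ell$ differs from the disk $|t|^2\le 2\cosh(\ell)+2 = e^\ell + O(1)$ only on a thin annulus whose lattice count is $O(e^{\ell/2})$. The ring of integers $\mathfrak{o}_K$ embeds in $\C$ as a lattice of covolume $\sqrt{D}$ when $D\equiv 1,2\pmod 4$ and $\sqrt{D}/2$ when $D\equiv 3\pmod 4$, so Gauss's circle problem gives
\[\#\{t\in\mathfrak{o}_K : |t|^2\le e^\ell + O(1)\} = \frac{\pi e^\ell}{\mathrm{covol}(\mathfrak{o}_K)} + O(e^{\ell/2}).\]
Dividing by $4$ to quotient out the orbit $\{\pm t,\pm\bar t\}$ produces the constants $\pi/(4\sqrt{D})$ and $\pi/(2\sqrt{D})$ asserted in the theorem, with total error $O(e^{\ell/2})$.

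The main technical obstacle is ruling out accidental coincidences: two distinct orbits $\{\pm t_1,\pm\bar t_1\}$ and $\{\pm t_2,\pm\bar t_2\}$ in $\mathfrak{o}_K$ might conceivably produce the same real length if their associated quadratics share the same larger root, and such coincidences would cause a straight division by $4$ to overcount the number of distinct real lengths. This is a Diophantine condition on the pairs $(|t_i|^2,\mathrm{Re}(t_i^2))$, and showing that the total coincidence contribution is $O(e^{\ell/2})$ is the crux of the argument. Secondary boundary contributions — purely real or purely imaginary traces, and those with $|t|^2$ close to the hyperbolicity threshold $4$ — are visibly $O(e^{\ell/2})$ and fit comfortably inside the claimed error term.
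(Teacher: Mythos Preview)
First, a framing point: the paper does not supply its own proof of this statement --- it is quoted verbatim as Theorem~4(b) of Marklof \cite{Marklof} and used as a black box in Section~\ref{sec:proof A}. So there is no in-paper argument to compare against; the relevant benchmark is Marklof's proof.

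Your setup is correct and matches Marklof's: the trace--length dictionary, the realizability of every $t\in\mathfrak{o}_K$ with $|t|^2>4$ as a trace in $\Gamma_D$, the covolume computation for $\mathfrak{o}_K\subset\C$, and the resulting Gauss-circle main term $\pi e^{\ell}/(4\,\mathrm{covol}(\mathfrak{o}_K))$ are all right. What you have actually counted after dividing by $4$ is (up to $O(e^{\ell/2})$) the number of trace orbits $\{\pm t,\pm\bar t\}$, equivalently the number of distinct \emph{complex} lengths --- this is the content of Marklof's Theorem~4(a).

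The genuine gap is exactly the step you flag but do not carry out. Passing from distinct complex lengths to distinct \emph{real} lengths means controlling the ``gap function'' $\mathcal{G}$ --- the number of pairs of distinct trace orbits sharing the same $\cosh\ell$. This is not a routine boundary estimate: it is the entire substance separating Marklof's Theorem~4(b) from 4(a), and it is precisely Conjecture~\ref{conj-gap} (with $\kappa=0$) in the Bianchi case. Marklof's argument here is arithmetic rather than purely lattice-point-geometric: he parametrizes by the real quantity $y$ (with $\lambda^{1/2}+\lambda^{-1/2}=y$), analyzes the ellipses $e(y)$ of traces producing a given real length, and shows that the contribution from $y\in\Q$ --- where coincidences can accumulate --- is of lower order (see the discussion of $\mathcal{E}(x)$ in Section~\ref{sec:proof A}). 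Without an argument of this kind, your proposal establishes only an upper bound $\mathcal{N}_r(\ell)\le cQ^{1/2}+O(Q^{1/4})$ for the real-length count; the matching lower bound, and hence the asymptotic, is missing.
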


Combining Theorems \ref{thm:ERT} and \ref{thm:M}, we obtain that a non-compact arithmetic hyperbolic 3-orbifold generates  
\[ \mathcal{N}(Q) = \mathcal{N}_r (\frac12 \log Q) \sim c Q^{1/2}\]
square-rootable Salem numbers $\lambda \leq Q$ of degree $\le 4$, where $c$ is the constant given by Theorem \ref{thm:M}. 

Going more carefully through the proof of Marklof's theorem allows us to conclude that most of these Salem numbers have degree equal to $4$. Indeed, the proof of Theorem~4(b) (loc.cit.) shows that the main contribution to the counting function $\mathcal{N}_r(\ell)$ comes from the ellipses $e(y)$ with $y \le x$, $y \not\in \Q$ and sums up to $\frac14\mathcal{E}(x)$, $x = 2\cosh \ell$ different lengths (cf. \cite[Section~4]{Marklof} for the definition of $\mathcal{E}(x)$). 
The corresponding Salem numbers $\lambda$ are square-rootable, hence $\deg(\lambda) = \deg(\lambda ^{\frac12})$  (by \cite[Lemmas~7.4 and 7.2]{EmeryRatcliffeTschantz}). On the other hand, we have $\lambda ^{\frac12} + \lambda ^{-\frac12} = y \not\in \Q$, hence $\deg(\lambda) >2$. 
%{\color{magenta} The definition of Salem numbers given in Smyth's survey article includes the condition that the degree of $\lambda$ is at least $4$. If we put that in the definition that we give at the beginning of Section 2, then we don't need this last sentence.}
%\mcom{Indeed, the standard definition of Salem numbers (not just by Smyth, but by others in number theory) is the one at the beginning of Section 2.1, which includes the sentence ``at least a conjugate has absolute value equal to 1'' and therefore necessitates $\deg \geq 4$. Now, in \cite{EmeryRatcliffeTschantz}, they do explicitly say that they want to include the case $\deg=2$, and I think we want to have Section 5.1 (correct me if I'm wrong). Either we leave things as they are or we work with the standard defintion all through the paper and then make the modification of the definition on Section 5.1.  {\bf I prefer to leave things as they are.} (In the meantime, I've moved the sentence ``Note that in contrast... we allow the degree to be equal to 2'' to the first paragraph of Section 2.1)}

This completes the proof of part A of the theorem.

\subsection{Proof of part B} \label{sec:proof B}

Here, we are interested in the case where $L=\Q$ and $\deg(\lambda)=4$. We begin by recalling the following lemma of Emery, Ratcliffe, and Tschantz:
\begin{lem}[\cite{EmeryRatcliffeTschantz}, Lemma 8.2 (2)] \label{lem:ERT 8.2}
 Let $\lambda$ be a Salem number of degree 4 and $p(x)$ its minimal polynomial. Then $\lambda$ is square-rootable over $\Q$ if and only if $p(-1)$ is a square in $\Z$. 
\end{lem}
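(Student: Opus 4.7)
The plan is to verify Definition~\ref{defn-sr} directly for $L=\Q$. Since $\lambda$ is Salem of degree~$4$, its minimal polynomial is palindromic: $p(x)=x^4+ax^3+bx^2+ax+1$ with $a,b\in\Z$. I need to decide whether there exists a monic palindromic $q(x)=x^4+c_1x^3+c_2x^2+c_1x+1$ of degree~$4$, with $c_2\in\Q$ and $c_1\in\sqrt{\alpha}\,\Q$ for some totally positive $\alpha\in\Q$ (i.e., $\alpha>0$), such that $q(x)q(-x)=p(x^2)$.

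The first step is the identity
\[
q(x)q(-x) = \bigl(x^4+c_2x^2+1\bigr)^2 - c_1^2\,x^2(x^2+1)^2,
\]
obtained by separating the even and odd parts of $q$. Expanding and matching coefficients against $p(x^2)=x^8+ax^6+bx^4+ax^2+1$ yields the two equations $2c_2-c_1^2=a$ and $c_2^2-2c_1^2+2=b$. Eliminating $c_2=(a+c_1^2)/2$ from the first leaves a quadratic in $y:=c_1^2$,
\[
y^2 + (2a-8)\,y + (a^2-4b+8) = 0,
\]
whose discriminant I expect to simplify to exactly $16\,p(-1)$, using $p(-1)=2-2a+b$.

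Both directions of the lemma then follow from this quadratic. Forward: if $\lambda$ is square-rootable over $\Q$ then $c_1^2\in\Q$ (since $c_1\in\sqrt{\alpha}\,\Q$ with $\alpha\in\Q$), so the quadratic must admit a rational root and its discriminant $16\,p(-1)$ must be a rational square; as $p(-1)\in\Z$, this forces $p(-1)$ to be a square in $\Z$. Conversely, if $p(-1)=N^2$ then the two roots are $y_\pm=(4-a)\pm 2N\in\Q$. Factoring $p(x)=(x^2-sx+1)(x^2-tx+1)$ over $\R$ with $s=\lambda+\lambda^{-1}>2$ and $t=2\cos\theta\in(-2,2)$ (forced by $\lambda$ being a genuine degree-$4$ Salem number) lets me rewrite
\[
y_\pm = \bigl(\sqrt{2+s}\pm\sqrt{2+t}\bigr)^2 > 0,
\]
so I may take $\alpha=y_+$, $c_1=\sqrt{\alpha}\in\sqrt{\alpha}\,\Q$, and $c_2=(a+y_+)/2\in\Q$, producing a valid $q$.

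The only non-trivial step is the algebraic verification that the discriminant equals precisely $16\,p(-1)$; everything else is bookkeeping against the definition. Positivity of $\alpha=y_+$ is immediate from the square expression, since $s>2$ excludes the degenerate case $2+s=0$.
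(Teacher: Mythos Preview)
The paper does not prove this lemma at all: it is quoted verbatim from \cite{EmeryRatcliffeTschantz} (Lemma~8.2(2)) and used as a black box in Section~\ref{sec:proof B}. So there is no ``paper's own proof'' to compare against.

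Your argument is correct. The coefficient match from $q(x)q(-x)=p(x^2)$ gives exactly the system $2c_2-c_1^2=a$, $c_2^2-2c_1^2+2=b$, and eliminating $c_2$ yields the quadratic in $y=c_1^2$ with discriminant $(2a-8)^2-4(a^2-4b+8)=16(2-2a+b)=16\,p(-1)$, as you claim. The forward direction is clean: $c_1\in\sqrt{\alpha}\,\Q$ forces $c_1^2\in\Q$, so the discriminant is a rational square, and since $p(-1)\in\Z$ it is an integer square. For the converse, your factorisation $p(x)=(x^2-sx+1)(x^2-tx+1)$ with $s>2$, $t\in(-2,2)$ gives $(2+s)(2+t)=2-2a+b=p(-1)=N^2$, whence $y_\pm=(4-a)\pm 2N=(\sqrt{2+s}\pm\sqrt{2+t})^2>0$; taking $\alpha=y_+>0$ and $c_1=\sqrt{\alpha}$, $c_2=(a+y_+)/2\in\Q$ produces the required $q$.

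It is worth noting that your computation also recovers the more general statement the paper cites later as Lemma~8.2(1): over a totally real $L$ one needs in addition that $4-a\pm 2k$ be totally positive, which over $\Q$ is automatic because both $y_\pm$ are visibly positive. That is precisely why the extra condition disappears in the $L=\Q$ case.
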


Our goal is to count those $\lambda$'s of degree 4 that are square-rootable. 
Thus, we wish to count polynomials  
\[p(x)=x^4+ax^3+bx^2+ax+1 \in \Z[x]\]
such that 
\begin{itemize}
\item[(a)] $p(x)$ is irreducible;
\item[(b)] $p(x)$ is a Salem polynomial, that is, its roots are $\lambda, \lambda^{-1}, \mu, \mu^{-1}$ with $\lambda \in \R_{>1}$ and $\mu \not \in \R, |\mu|=1$;
\item[(c)] $p(-1)=k^2$ for $k\in \Z$;
\item[(d)] $\lambda \leq Q$. 
\end{itemize}
We remark that $k$ above must be different from $0$, since $-1$ cannot be a root of $p(x)$. Thus we can assume that $k>0$. 

Condition (c) is equivalent to 
\begin{equation}\label{eq:p(-1)}
2+b-2a=k^2, \mbox{ for } k\in\Z, k>0.
\end{equation}
Now we focus on condition (b). Our first observation is that 
\[-a=\lambda+\lambda^{-1}+\mu+\mu^{-1}< \lambda +3\leq Q+3.\]
In addition, $\lambda+\lambda^{-1}\geq 2$ implies that 
\[-a=\lambda+\lambda^{-1}+\mu+\mu^{-1}> 0.\]
Therefore, 
\begin{equation}\label{eq:a}
0< -a < Q+3.
\end{equation}

Write $y=x+x^{-1}$ and consider the polynomial 
\[r(y)=y^2+ay+b-2.\]
Then it is immediate to see that $x^2r(x+x^{-1})=p(x)$. 
Condition (b) is equivalent to asking that $r(y)$ has two real roots, with one $>2$ and the other in the interval $(-2,2)$. Writing the roots as 
\[\frac{-a\pm \sqrt{a^2-4(b-2)}}{2},\]
we see that we need 
\begin{equation}\label{eq:roots}
 a^2>4b-8.
\end{equation}
Combining with condition \eqref{eq:p(-1)}, we have 
\begin{equation}\label{eq:roots2}
(a-4)^2>4k^2.
\end{equation}
 In addition, we can rewrite
\begin{eqnarray*}
 2<\frac{-a+\sqrt{a^2-4(b-2)}}{2}& \mathrm{as} & 4+a <\sqrt{a^2-4(b-2)}, \ \mathrm{and} \\
2>\frac{-a-\sqrt{a^2-4(b-2)}}{2}>-2& \mathrm{as} & 4-a >\sqrt{a^2-4(b-2)}>-4-a.
 \end{eqnarray*}
Combining the above lower bounds for the square-root, we have 
\begin{equation}\label{eq:ab}
a^2-4(b-2)>(4+a)^2, \ \mathrm{which \ simplifies \ to} \ -2a>2+b.
\end{equation}
Similarly, the upper bound gives us 
\[(4-a)^2>a^2-4(b-2), \ \mathrm{which \ simplifies \ to} \ 2+b >2a,\]
but this condition is already a consequence of \eqref{eq:p(-1)}. 

Combining \eqref{eq:p(-1)} with \eqref{eq:ab} we obtain 
\begin{equation}\label{eq:ka}
k^2<-4a.
\end{equation}
Notice that $-16a<(a-4)^2$, and therefore equations \eqref{eq:roots} and \eqref{eq:roots2} are consequences of \eqref{eq:ka}.

In sum, we have the following conditions
\[b=k^2+2a-2, \quad k^2<-4a, \quad 0<-a<Q+3.\]
The number of solutions for this is given by
\[\sum_{j=1}^{Q+2} (\lceil\sqrt{4j}\rceil-1) =2\int_1^{Q+2}\sqrt{x} dx +O(Q)=\frac{4}{3}Q^{3/2}+O(Q).\]
We have not yet taken into account condition (a). The only way for $p(x)$ to be reducible is to have
\[p(x)=(x^2+\alpha x+1)(x^2+\beta x+1)\]
in $\Z[x]$, where one of the factors (say, the first) is the minimal polynomial of $\mu$. Since $|\mu|=1$, we conclude that $|\alpha|<2$ and therefore the only possible values for $\alpha$ are $0,\pm 1$. Choosing the value of $\alpha$ and comparing the conditions 
on the coefficients $a,b$ of $p(x)$, we have
\begin{eqnarray*}
 \alpha=0 & \mathrm{and} & b=2;\\
 \alpha=1& \mathrm{and} & b=a+1;\\
\alpha=-1 & \mathrm{and} & 1=a+b.
 \end{eqnarray*}
There are $O(Q)$ choices of $a$ and $b$ satisfying the three equations above, which completes the proof of part~B. \qed

\subsection{}
We remark that we can simplify our reasoning from the previous section to recover the result of G\"otze and Gusakova for the case where $m=1$ (which corresponds to Salem numbers of degree $4$).  Indeed, we must again count the possible polynomials $p(x)$, this time without condition (c). From the previous discussion, we have the conditions 
\[ -2a>2+b>2a, \quad 0<-a<Q+3.\]
We also have $a^2>4b-8$, but it is easy to see that this condition is a consequence of the first equation above. 

The number of solutions for the above inequalities is 
\[\sum_{j=1}^{Q+2} (4j-1)=2Q^2+O(Q),\]
and this recovers Theorem \ref{thm:GG} for $m = 1$.

\section{Cocompact case}

In this section we consider a generalization of Theorem~\ref{thm1} to compact orbifolds. Here the results are less precise and conditional on the \emph{gap conjecture} of Marklof:

\begin{conj}[\cite{Marklof}, Conjecture 1]\label{conj-gap}
Let $\Gamma_\mathfrak{D}$ be an arithmetic quaternion group. Then the number of gaps in the complex length spectrum of $\HH^3/\Gamma_\mathfrak{D}$ up to length $\ell = \log x$ is given by 
\[ \mathcal{G}(x)  = \kappa x + o(x), \quad x\to\infty, \]
where $\kappa \ge 0$ is a constant depending only on $\Gamma_\mathfrak{D}$, and small compared to $2^{2d-3}\pi |D_\mathfrak{a}|^{-1/2}$. It could even be the case that $\kappa = 0$ for all $\Gamma_\mathfrak{D}$. (Here $d$ denotes the degree of the field of definition $K$ and $D_\mathfrak{a}$ is the discriminant of $\mathfrak{a} = \tr \mathfrak{D}$.)
\end{conj}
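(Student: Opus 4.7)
The conjecture measures how often formally-admissible traces fail to be realized as traces of hyperbolic elements of $\Gamma_\mathfrak{D}$. If $\gamma\in\mathfrak{D}^1$ is hyperbolic with complex length $\ell+i\theta$, then $\tr\gamma=2\cosh((\ell+i\theta)/2)$ lies in the lattice $\mathfrak{a}=\tr\mathfrak{D}$ and satisfies $|\tr\gamma|^2\le 4e^\ell$. A straightforward lattice-point count in $\mathfrak{a}\subset K\otimes_\Q \R$ already produces $\sim 2^{2d-3}\pi |D_\mathfrak{a}|^{-1/2}\,x$ formally admissible values up to length $\log x$, so the content of the conjecture is that the set $\mathcal{G}(x)$ of such lattice points which are \emph{not} realized as traces of any hyperbolic conjugacy class has size only a small linear fraction of this total. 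My plan would be to recast this as an Eichler optimal-embedding problem and then bound the exceptional set.

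First, by the Eichler--Hasse criterion, a value $t\in\mathfrak{a}$ with $t^2-4$ not a square in $K$ occurs as $\tr\gamma$ for some $\gamma\in\mathfrak{D}^1$ if and only if the $\mathfrak{o}_K$-order $\mathcal{R}_t=\mathfrak{o}_K[X]/(X^2-tX+1)$ admits an optimal embedding into $\mathfrak{D}$. This happens precisely when the quadratic extension $K(\sqrt{t^2-4})/K$ splits the quaternion algebra $A$ at each finite ramified place of $A$, and a global class-group condition on $\mathcal{R}_t$ holds; when both are satisfied, the number of conjugacy classes of embeddings is given explicitly by Eichler's mass formula. The gaps therefore fall into two strata: lattice points $t$ ruled out by a local obstruction at some ramified prime of $A$, and those surviving every local test but failing the global one. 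A sieve on the discriminant $t^2-4$ shows that each ramified prime contributes a positive, computable proportion of locally obstructed lattice points, and combining these by inclusion--exclusion yields a clean linear term $\kappa_{\mathrm{loc}}\,x+o(x)$. It is natural to guess that this $\kappa_{\mathrm{loc}}$ is the conjectural $\kappa$.

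The main obstacle is the global step: showing that among lattice points $t$ surviving every local obstruction, the number with no optimal embedding into $\mathfrak{D}$ is only $o(x)$. This reduces to a statement that the class groups of the one-parameter family of quadratic $\mathfrak{o}_K$-orders $\mathcal{R}_t$, with discriminants of size $\sim|t|^2$, almost always represent the required genus; and at this level of precision it appears to demand, in the cocompact case, either a subconvex bound for the associated $L$-functions or an equidistribution result for Heegner-type cycles on $\HH^3/\Gamma_\mathfrak{D}$ of Linnik--Duke type. In the Bianchi (non-compact) case one can side-step this by exploiting the explicit Fourier expansions of Eisenstein series that underpin Marklof's proof of Theorem \ref{thm:M}, but those tools are unavailable when $\Gamma_\mathfrak{D}$ is cocompact. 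I would therefore expect my approach to first give only a weaker bound $\mathcal{G}(x)=O(x\log x)$ or $O(x^{1+\varepsilon})$ coming from a straightforward sieve, and that sharpening this to the form $\kappa x + o(x)$ predicted by the conjecture is the true difficulty — plausibly requiring a trace-formula attack with test functions localizing on conjugacy classes, in the spirit of, but genuinely extending, Marklof's method.
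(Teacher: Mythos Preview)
The statement you are attempting to prove is a \emph{conjecture}, not a theorem: the paper does not prove it, and there is no proof in the paper to compare your proposal against. Conjecture~\ref{conj-gap} is quoted verbatim from Marklof's paper and is explicitly flagged as open; the paper states that it ``is known to be true (with $\kappa = 0$) when $\Gamma_\mathfrak{D} = \mathrm{PSL}(2, \mathfrak{o}_K)$ is a Bianchi group, but it remains open even for the other arithmetic groups of the simplest type.'' The conjecture then enters only as a \emph{hypothesis} in Theorem~\ref{thm2}.A, where the authors assume it in order to extend the length-counting argument from the Bianchi case to cocompact $\Gamma_\mathfrak{D}$.

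Your proposal is therefore not a proof of something the paper proves, but a sketch of a possible attack on an open problem. As such it is reasonable in outline --- the reduction to an optimal-embedding problem via the Eichler criterion is the natural framework, and your identification of the global class-group step as the crux is exactly right --- but you yourself concede in the final paragraph that you do not know how to carry out that step and would expect only $O(x\log x)$ or $O(x^{1+\varepsilon})$ from a direct sieve. That concession is accurate: the subconvexity/equidistribution input you name is genuinely what is missing, and nothing in your write-up bridges that gap. So what you have written is an informed discussion of why the conjecture is hard, not a proof, and there is nothing in the paper to grade it against.
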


The conjecture is known to be true (with $\kappa = 0$) when $\Gamma_\mathfrak{D} = \mathrm{PSL}(2, \mathfrak{o}_K)$ is a Bianchi group, but it remains open even for the other arithmetic groups of the simplest type. The main result of this section is the following theorem. 

\begin{thm}\label{thm2}
\begin{itemize}
\item[A.] Let $\Gamma_\mathfrak{D}$ be an arithmetic quaternion group of the simplest type with the totally real field of definition $L$, and let $\mathcal{O}_\mathfrak{D} = \HH^3/\Gamma_\mathfrak{D}$. Assume that Conjecture~\ref{conj-gap} holds for $\Gamma_\mathfrak{D}$. Then $\mathcal{O}_\mathfrak{D}$ generates
\[c_1 Q^{1/2} + o(Q^{1/2}), \quad Q\to\infty \]
 Salem numbers of degree $4$ over $L$ that are square-rootable over $L$, where $c_1 = \frac{2^{2d-3}\pi}{|D_\mathfrak{a}|} - \frac{\kappa}{4}$ with the notation as in Conjecture~\ref{conj-gap}.
\item[B.] Let $L$ be a totally real number field. There exists a constant $c_2 = c_2(L) \ge 0$ such that the number of Salem numbers of degree $4$ over $L$ that are square-rootable over $L$ and less than or equal to $Q$ is 
\[c_2 Q^{3/2}+o(Q^{3/2}).\]
\end{itemize}

\end{thm}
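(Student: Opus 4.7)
For Part A, I would combine Theorem~\ref{thm:ERT} --- which promotes each hyperbolic $\gamma\in\Gamma_\mathfrak{D}$ to a square-rootable Salem number $\lambda=e^{2\ell(\gamma)}$ over $L$ --- with Marklof's Theorem~4(a) and Conjecture~\ref{conj-gap}. Marklof's theorem gives the count of complex lengths \emph{with} multiplicity, and Conjecture~\ref{conj-gap} bounds the number of repetitions; subtracting yields an asymptotic for the distinct complex lengths up to $\log x$. Passing to real lengths exactly as in the Bianchi case of Section~\ref{sec:proof A} introduces the factor of $1/4$ and produces the leading term $c_1 e^\ell + o(e^\ell)$. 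Substituting $\ell=\tfrac{1}{2}\log Q$ gives $c_1 Q^{1/2}+o(Q^{1/2})$ square-rootable Salem numbers of degree $\le 4$ over $L$. The argument of Section~\ref{sec:proof A} --- invoking $\deg_L(\lambda)=\deg_L(\lambda^{1/2})$ from \cite[Lemmas 7.2 and 7.4]{EmeryRatcliffeTschantz} and the generic non-$L$-rationality of $\lambda^{1/2}+\lambda^{-1/2}$ --- then rules out degree less than $4$ for all but $o(Q^{1/2})$ of them.

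For Part B, the plan is to generalize the polynomial enumeration of Section~\ref{sec:proof B} from $\Z$ to $\mathfrak{o}_L$. The admissible polynomials are $p(x)=x^4+ax^3+bx^2+ax+1$ with $(a,b)\in\mathfrak{o}_L^2$ satisfying: (i) $p$ is irreducible over $L$; (ii) at the identity embedding the roots are $\lambda\in(1,Q]$, $\lambda^{-1}$, and a conjugate pair on the unit circle; (iii) at every non-identity real embedding $\sigma$, all four roots of $p^\sigma$ lie on the unit circle (this is the Salem condition over $L$ arising from the positive-definiteness of $f^\sigma$); and (iv) $p$ is square-rootable over $L$. A first step is to establish the $L$-analog of Lemma~\ref{lem:ERT 8.2}: condition (iv) is equivalent to $p(-1)\in\mathfrak{o}_L^{\times 2}$. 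Writing $p(-1)=k^2$ and setting $c_\pm^2:=4\pm 2k-a$, one computes $c_+^2+c_-^2=8-2a$ and $c_+^2 c_-^2=8+a^2-4b$; conditions (ii)--(iii) force both quantities to be nonnegative at every embedding, so $c_+^2$ and $c_-^2$ are simultaneously nonnegative everywhere. Since nonzero elements of $L$ have all nonzero embeddings, at least one of $c_\pm^2$ is totally positive and serves as the required $\alpha$.

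With square-rootability encoded as $p(-1)=k^2$, parameterize admissible polynomials by $(a,k)\in\mathfrak{o}_L^2$ with $k>0$ at identity (and $b=k^2+2a-2$). At identity the constraints reduce to $0<-a^{(1)}<Q+3$ and $0<(k^{(1)})^2<-4a^{(1)}$, exactly as in Section~\ref{sec:proof B}. At each non-identity $\sigma$ the Salem condition confines $(a^\sigma,k^\sigma)$ to a bounded region $R_\sigma=\{|a^\sigma|\le 4,\ |k^\sigma|\le (4-a^\sigma)/2,\ (k^\sigma)^2\ge -4a^\sigma\}\subset\R^2$ of fixed Lebesgue measure $\mu(R_\sigma)=32/3$ independent of $\sigma$. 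The total count of valid $(a,k)$ is then a lattice-point count in the rank-$2d$ lattice $\mathfrak{o}_L^2\subset\R^{2d}$ (covolume $|d_L|$) inside a region of volume $\tfrac{4}{3}(32/3)^{d-1}Q^{3/2}+O(Q^{1/2})$, yielding the main term $c_2 Q^{3/2}$ with $c_2=\tfrac{4(32/3)^{d-1}}{3|d_L|}$. Reducible $p$ factor as $(x^2+\alpha x+1)(x^2+\beta x+1)$, and the bounded range of $\alpha^\sigma$ at every embedding leaves a one-parameter family of size $O(Q)$, absorbed into the error. Specializing to $L=\Q$ recovers $c_2=\tfrac{4}{3}$ in agreement with Theorem~\ref{thm1}(B).

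The principal obstacle is sharpening the lattice-point count to produce error $o(Q^{3/2})$. Because one coordinate of the region has length $\sim Q$ and another $\sim Q^{1/2}$ while the other $2d-2$ coordinates are bounded, the standard Lipschitz principle applied directly to the full $2d$-dimensional region gives an error of order $Q^{3/2}$, matching the main term. To do better, the count should be carried out iteratively: fix $a\in\mathfrak{o}_L$ satisfying its coordinate constraints; count admissible $k\in\mathfrak{o}_L$ in the resulting thin box of dimensions $O(Q^{1/2})\times O(1)^{d-1}$ via the Lipschitz principle; and then sum over $a$ using an equidistribution argument for lattice points in elongated regions. For Part A, the only remaining subtlety is verifying that the passage from the complex-length count (via Marklof's theorem and the gap conjecture) to the real-length count produces precisely the stated constant $c_1$, including the $\tfrac14\kappa$ correction.
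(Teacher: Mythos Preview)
Your outline for Part~A matches the paper's argument: it too replaces Marklof's Theorem~4(b) by Theorem~4(a) combined with the gap conjecture and then repeats Section~\ref{sec:proof A} verbatim. One point the paper addresses that you omit is that Marklof's Theorem~4(a) carries the extra hypothesis that $\tr\mathfrak{D}^1$ is closed under complex conjugation; the paper observes this is automatic for groups of the simplest type because the complex field of definition has a totally real index-two subfield.

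For Part~B your plan is structurally the same as the paper's (parametrize by $(a,k)\in\mathfrak{o}_L^2$ with $b=k^2+2a-2$, cut out the Salem region, and count lattice points), but you make a genuine simplification that the paper does not. The paper invokes \cite[Lemma~8.2(1)]{EmeryRatcliffeTschantz} in full and therefore carries the extra requirement that one of $4-a\pm 2k$ be totally positive; this produces the asymmetric ``diagonal'' constraint on $k^\sigma$ and is the reason the paper only obtains an \emph{upper bound} for $c_2$ in the subsequent lemma. Your argument that $c_+^2c_-^2=8+a^2-4b$ and $c_+^2+c_-^2=8-2a$ are both totally nonnegative under the Salem conditions (ii)--(iii) is correct, and since neither $c_\pm^2$ can vanish (that would force $r(y)$ to have a double root, contradicting $\lambda+\lambda^{-1}>2>\mu+\mu^{-1}$), both are in fact totally positive. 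Hence condition~(d) is redundant, your region $R_\sigma$ is exactly right, and your explicit constant $c_2=\tfrac{4}{3}(32/3)^{d-1}/|D_L|$ is sharper than what the paper states.

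Both your proposal and the paper are honest that the lattice-point count in a region which is thin in $2d-2$ directions is the delicate step; the paper disposes of it by a reference to \cite[Chapter~V, Theorem~1]{Lang} and \cite[p.~525]{Marklof}, while you sketch an iterated Lipschitz/equidistribution argument. Neither treatment supplies full detail, so the ``principal obstacle'' you flag is real but no worse than in the paper itself.
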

\begin{proof} \textbf{A.}
This part of the proof is similar to the argument in Section~\ref{sec:proof A}. The only difference is that instead of Marklof's Theorem~4(b) cited there we now apply his Theorem~4(a) together with the gap conjecture \cite{Marklof}.  Note that in Theorem~4(a) [loc. cit.] there is an additional assumption that the set of traces $\tr \mathfrak{D}^1$ is invariant under complex conjugation. This assumption allows us to prove Lemma~2 [loc. cit.] which is then used in the proof of the theorem. It is well known that arithmetic groups of the simplest type always have a totally real index two subfield of their complex field of definition (see \cite[Section~10.2]{MR-book}). Therefore, we are not required to impose the aforementioned extra assumption on the traces. The rest of the argument is the same as in Section \ref{sec:proof A}.   

\medskip

\noindent\textbf{B.} We now use the general form of square-rootable Salem numbers given in Definition~\ref{defn-sr}. We work over an arbitrary totally real field $L$, but consider only the Salem numbers $\lambda$ with $\deg_L(\lambda) = 4$. The latter assumption allows us to apply the method from Section~\ref{sec:proof B}. 

First recall a more general lemma from \cite{EmeryRatcliffeTschantz} (compare with Lemma~\ref{lem:ERT 8.2}):

\begin{lem}[\cite{EmeryRatcliffeTschantz}, Lemma 8.2 (1)]
 Let $\lambda$ be a Salem number, let $L\subset\mathbb{Q}(\lambda+\lambda^{-1})$ a subfield, and let $p(x)$ be the minimal polynomial of $\lambda$ over $L$. If 
 $p(x)=x^4+ax^3+bx^2+ax+1 \in \mathfrak{o}_{L}[x]$, then $\lambda$ is square-rootable over $L$ if and only if there is a positive element $k$ of $L$ such that $p(-1) = k^2$ and $4-a\pm 2k$
is a totally positive element of $L$, in which case $\lambda$ is square-rootable over $L$ via $4-a\pm 2k$. 
 \end{lem}

As before, we seek to count the Salem numbers $\lambda$ of degree $4$ over $L$ that are square-rootable. 
This amounts to counting the polynomials  
\[p(x)=x^4+ax^3+bx^2+ax+1 \in \mathfrak{o}_{L}[x]\]
for which
\begin{itemize}
 \item[(a)] $p(x)$ is irreducible;
\item[(b)] its root $\lambda$ is a Salem number of degree $4[L:\Q]$, that is, the roots are $\lambda, \lambda^{-1}, \mu, \mu^{-1}$ with $\lambda \in \R_{>1}$ and $\mu \not \in \R, |\mu|=1$, and for all non-identity places $\sigma: L\to\R$ the roots of $p^{\sigma}(x)$ have absolute value one (cf. \cite[Proof of Theorem~5.2(1)]{EmeryRatcliffeTschantz});
\item[(c)] $p(-1)=k^2$ for $k\in \mathfrak{o}_{L}$, $k > 0$;
\item[(d)] $4-a+2k$ or $4-a-2k$ is totally positive; 
\item[(e)] $\lambda \leq Q$. 
\end{itemize}

The proof proceeds much as in Section \ref{sec:proof B}. Here, condition (c) is equivalent to 
\begin{equation}\label{eq:p(-1)-c}
2+b-2a=k^2, \mbox{ for } k\in\mathfrak{o}_{L}, k>0.
\end{equation}
Now we turn our attention to condition (b), which allows us to deduce that
\[-a=\lambda+\lambda^{-1}+\mu+\mu^{-1}< \lambda +3\leq Q+3,\]
where the final inequality follows from condition (e).
%\[-4 < -a^\sigma = \lambda^\sigma+(\lambda^\sigma)^{-1}+\mu^\sigma+(\mu^\sigma)^{-1}< 4.\]
Moreover, since $\lambda$ is a Salem number, we know that $\lambda+\lambda^{-1}\geq 2$, hence
\[-a=\lambda+\lambda^{-1}+\mu+\mu^{-1}> 0.\]
Combining the two displayed inequalities for $-a$ yields
\begin{eqnarray}\label{eq:a-c}
 0   <  -a < Q+3. 
 %-4  <  - a^{\sigma} < 4& \mbox{ for any non-identity $\sigma: L\to\R$}. \nonumber
\end{eqnarray}
Furthermore, since $p^\sigma(x)$ has all of the roots with absolute value one, it must be the case that $|a^\sigma| < 4$ for any non-identity $\sigma: L\to\R$.

Next, we perform a change of variable, writing $y=x+x^{-1}$. Then, if we define
\[r(y)=y^2+ay+b-2,\]
we see that $p(x)$ can be expressed in terms of this new polynomial: $x^2r(x+x^{-1})=p(x)$. 
In other words, condition (b) amounts to requiring that $r(y)$ has two real roots, with one $>2$ and the other in the interval $(-2,2)$, whose $\sigma$-conjugates are all in the interval $(-2,2)$. The roots of $r(y)$ are of the form
\[\frac{-a\pm \sqrt{a^2-4(b-2)}}{2}.\]
As a result, we need 
\begin{equation}\label{eq:roots-c}
 (a^\sigma)^2>4b^\sigma - 8 \mbox{ for all } \sigma: L\to\R.
\end{equation}
Combining this with condition \eqref{eq:p(-1)-c} yields
\begin{equation}\label{eq:roots2-c}
(a^\sigma - 4)^2 > 4(k^\sigma)^2.
\end{equation} 
Hence, from \eqref{eq:a-c}, for all non-identity $\sigma: L\to\R$ we have
\begin{equation}\label{eq:k^s-c}
-4 < k^\sigma <4.
\end{equation} 
We need to impose some additional assumptions on $k^\sigma$ in order to satisfy condition~(d). More precisely, we have that either
\[\frac{a^\sigma -4}{2} < k^\sigma <4\]
for  all non-identity $\sigma: L\to\R$
or that
\[-4<k^\sigma <\frac{4-a^\sigma}{2}\]
for all non-identity $\sigma: L\to\R$. In both cases, this condition replaces \eqref{eq:k^s-c}.

%This would reduce the interval in \eqref{eq:k^s-c} still keeping it non-empty and containing integral points, so it will not change our asymptotic counting. 

Furthermore, taking into account the roots of $r(y)$ and the intervals that they live in, we deduce the following inequalities: 
\begin{eqnarray*}
 2<\frac{-a+\sqrt{a^2-4(b-2)}}{2}& \mathrm{i.e.,} & 4+a <\sqrt{a^2-4(b-2)};\\
2>\frac{-a-\sqrt{a^2-4(b-2)}}{2}>-2& \mathrm{i.e.,} &  4-a >\sqrt{a^2-4(b-2)}>-4-a; \mbox{ and }\\
\mbox{ for all non-identity }\sigma: L\to\R  &\mbox{have}& 4 \pm a^\sigma >\sqrt{(a^\sigma)^2 - 4(b^\sigma - 2)}> -4 \pm a^\sigma.
 \end{eqnarray*}
Next, we combine the lower bounds for the square-root that we obtained above, which yields 
\begin{equation}\label{eq:ab-c}
a^2-4(b-2)>(4+a)^2, \ \mathrm{which \ simplifies \ to} \  -2a>2+b.
\end{equation}
Likewise, we combine the upper bounds, which gives us 
\[(4-a)^2>a^2-4(b-2) , \ \mathrm{which \ simplifies \ to} \ 2+b >2a.\]
Note that this inequality is already a consequence of \eqref{eq:p(-1)-c}. 

For the conjugates, taking into account \eqref{eq:a-c}, we only have a non-trivial upper bound for the square root, which gives
\[ 2+b^\sigma > \pm 2a^\sigma.\]

Next, we combine \eqref{eq:p(-1)-c} with \eqref{eq:ab-c}, which produces a simple inequality 
\begin{equation}\label{eq:ka-c}
k^2<-4a.
\end{equation}
Observe that $-16a<(a-4)^2$, which means that equations \eqref{eq:roots-c} and \eqref{eq:roots2-c} are consequences of \eqref{eq:ka-c}.

To summarize, we have shown that the following inequalities must simultaneously hold:
\[b=k^2+2a-2;\] 
\[k^2<-4a;\] 
\begin{equation}\label{eq:diagonal}\frac{a^\sigma -4}{2} < k^\sigma <4 \mbox{ or } -4<k^\sigma <\frac{4-a^\sigma}{2};\end{equation}
\[0<-a<Q+3, \quad -4 <a^\sigma < 4.\]
Condition \eqref{eq:diagonal} should be interpreted as $\frac{a^\sigma -4}{2} < k^\sigma <4 $ for all non-identity $\sigma$ or $-4<k^\sigma <\frac{4-a^\sigma}{2}$ for all non-identity $\sigma$.

The number of solutions can be counted in a manner similar to \cite[p. 525]{Marklof}, following standard methods from the geometry of numbers (see, for example, \cite[Theorem 1, Chapter V]{Lang}). The count that we obtain is of the form $c_2 Q^{3/2}+o(Q^{3/2})$, where $c_2$ is a nonnegative constant. We include some details of this computation in the next lemma.
 
%\[\sum_{j=1}^{Q+2} 2[\sqrt{j}] =2\int_1^{Q+2}\sqrt{x} dx +O(Q)=\frac{4}{3}Q^{3/2}+O(Q).\]

Observe that we still have not used condition (a) in our count. As in the previous section, the only way for $p(x)$ to be reducible is for it to factor into a product of quadratics, i.e.,  
\[p(x)=(x^2+\alpha x+1)(x^2+\beta x+1)\]
in $\mathfrak{o}_{L}[x]$, with one of them being the minimal polynomial of $\mu$. Without loss of generality, suppose that the first factor has this property. Since its roots have absolute value $1$ for all $\sigma: L\to \R$, we conclude that $|\alpha^\sigma|<2$ for all $\sigma$. This gives finitely many choices for $\alpha \in \mathfrak{o}_L$, and hence $O(Q)$ choices for $p(x)$.
\end{proof}

In fact, we can use the geometry of numbers to give a bound for $c_2$. Let $h=[L:\Q]$. Then we have $h$ embeddings of $L$ into $\R$ given by $\sigma_1=1, \sigma_2,\dots,\sigma_h$. Consider the function 
\[\varphi: L \rightarrow \R^h\]
\[\varphi(x)=(x^{\sigma_1}, x^{\sigma_2},\dots, x^{\sigma_h}).\]
It is well-known that the image of $\mathfrak{o}_{L}$ is a full lattice whose fundamental domain $\phi_L$ is a parallelotope of volume $|D_L|^{1/2}$. 

\begin{lem}
Let $c_2$ be as in Theorem \ref{thm2}.B. Then 
\[c_2\leq \frac{2^{2h+2}(12+7\delta+\delta^2)^{h-1}}{3|D_L|},\]
where \[\delta=2\min_{\phi_L} \max_{\mathrm{diagonal}} \phi_L.\]
In other words, $\delta$ is twice the minimal value of the maximal diagonals of all the possible parallelotopes $\phi_L$ corresponding to fundamental domains of the lattice given by $\mathfrak{o}_L$. 
\end{lem}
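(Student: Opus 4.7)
The plan is to estimate the number of pairs $(a,k)\in\mathfrak{o}_L^2$ satisfying the constraints collected in the proof of Theorem~\ref{thm2}.B by the geometry of numbers. Via the Minkowski embedding $\varphi$, $\mathfrak{o}_L^2$ sits inside $\R^{2h}$ as a lattice of covolume $|D_L|$. Choosing a fundamental parallelotope $\phi_L$ realizing the minimum in the definition of $\delta$, the product $\phi_L\times\phi_L$ is (after recentering) contained in a box whose extent along each of the $2h$ coordinate axes is controlled by $\delta$; the standard lattice-point inequality
\[ \bigl|\mathfrak{o}_L^2\cap R\bigr|\;\le\;\frac{\mathrm{vol}(R+\phi_L\times\phi_L)}{|D_L|} \]
then reduces counting to bounding the volume of $R$ after a coordinate-wise inflation by a controlled multiple of $\delta$.

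The region $R$ splits into the two alternatives of~\eqref{eq:diagonal}, so by the union bound it suffices to treat each separately. Each alternative factors as a product $\prod_\sigma R^{(\cdot)}_\sigma$ over the real embeddings $\sigma:L\hookrightarrow\R$, with $R^{(\cdot)}_\sigma$ a planar region in the $(a^\sigma,k^\sigma)$-plane, so the Minkowski-inflated volume also factors plane-by-plane. For $\sigma=\sigma_1$ the identity embedding, I would relax the sign constraint $k^{\sigma_1}>0$ at a cost of a factor of $2$, so that the region $\{-Q-3<a<0,\,|k|<2\sqrt{-a}\}$ has area $\tfrac{8}{3}(Q+3)^{3/2}$ and the $\delta$-inflation only affects lower-order terms in $Q$. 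For a non-identity $\sigma$ and case~(i), the region $\{-4<a^\sigma<4,\,(a^\sigma-4)/2<k^\sigma<4\}$, inflated by $\delta$ on each side in each coordinate, has area
\[ \int_{-4-\delta}^{4+\delta}\frac{12+4\delta-a^\sigma}{2}\,da^\sigma\;=\;(6+2\delta)(8+2\delta)\;=\;4\bigl(12+7\delta+\delta^2\bigr), \]
and case~(ii) gives the same area by symmetry.

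Multiplying the per-embedding areas, doubling for the two cases, and dividing by $|D_L|$ yields to leading order
\[ \frac{2\cdot\tfrac{8}{3}Q^{3/2}\,\bigl[4(12+7\delta+\delta^2)\bigr]^{h-1}}{|D_L|}\;=\;\frac{2^{2h+2}(12+7\delta+\delta^2)^{h-1}}{3|D_L|}\,Q^{3/2}, \]
from which the advertised upper bound on $c_2$ is read off. The main technical obstacle will be calibrating the inflation precisely: one must verify that the extremal parallelotope $\phi_L$ indeed permits coordinate-wise inflation by $\delta$ (rather than a larger multiple) by bounding each coordinate extent of $\phi_L$ by the maximal diagonal, and one must show that the boundary errors in passing from lattice-point counts to volumes are $o(Q^{3/2})$ uniformly. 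The overlap between cases~(i) and~(ii), where both alternatives of condition~(d) simultaneously hold, is harmless for the stated upper bound and can be discarded by the union bound.
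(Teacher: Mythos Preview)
Your approach is essentially the same as the paper's: embed $\mathfrak{o}_L^2$ as a lattice of covolume $|D_L|$ in $\R^{2h}$, bound lattice points in the constraint region $R$ by the volume of a $\delta$-inflated region divided by $|D_L|$, factor the inflated region over the embeddings, compute $\tfrac{8}{3}Q^{3/2}$ at the identity place and $(6+2\delta)(8+2\delta)=4(12+7\delta+\delta^2)$ at each non-identity place, and then double for the two alternatives in condition~(d). The paper phrases the lattice-point inequality via translates $n(Q,\delta)$, $m(Q,\delta)$ of the fundamental parallelotope rather than via the Minkowski sum $R+\phi_L\times\phi_L$, but this is the same argument, and your caveats about calibrating the inflation and controlling boundary errors correspond exactly to the paper's remark that the region is thin in the non-identity directions (which is why only an upper bound is obtained).
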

\begin{proof}
 We concentrate on counting the number of $a,k \in \mathfrak{o}_{L}$ such that 
 \begin{equation}\label{eq:system}
 \begin{cases}
 0<-a<Q+3,\\
-4 <a^\sigma < 4 & \forall \sigma \not = 1, \\
 k^2<-4a, \\
 \frac{a^\sigma -4}{2} < k^\sigma <4& \forall \sigma \not = 1.\\
\end{cases} 
 \end{equation}
The result with $-4<k^\sigma <\frac{4-a^\sigma}{2}$ is analogous and will yield the same number.
 
Now we consider two coordinates, namely $\tilde{\varphi}:L\times L  \rightarrow \R^{2h}$ given by $\tilde{\varphi}(a,k)=(\varphi(a),\varphi(k))$.  The image of $\mathfrak{o}_{L}\times \mathfrak{o}_{L}$ is a full lattice in $\R^{2h}$ whose fundamental parallelotope $\tilde{\phi_L}$ has volume $|D_L|$.

The number of solutions to \eqref{eq:system} is approximated by the number of translates of $\tilde{\phi_L}$ by the image of $\mathfrak{o}_{L}\times \mathfrak{o}_{L}$ that fit in the set 
$S_L(Q,0)$, where 
\begin{align*}
S_L(Q,\delta)=\Big\{({\bf x},{\bf y})\in  \R^{2h}\, \Big|\,& -\delta<-x_1<Q+3+\delta, |x_i|<4+\delta,  |y_1|<\sqrt{-4x_1}+\delta,\\
&\frac{x_i -4}{2}-\delta < y_i <4+\delta, \,  i=2,\dots,h\Big\}.
\end{align*}
More precisely, let $n(Q,\delta)$ be the number of translations of $\tilde{\phi_L}$ which are contained in  $S_L(Q,\delta)$, and let $m(Q,\delta)$ be the number of translations of $\tilde{\phi_L}$ which intersect  $S_L(Q,\delta)$. Let $\ell(Q)$ be the number of lattice points in $S_L(Q,0)$. Then we have 
\begin{equation}\label{eq:vol}
n(Q,\delta)\mathrm{Vol}(\tilde{\phi_L})\leq \mathrm{Vol}(S(Q,\delta)) \leq m(Q,\delta)\mathrm{Vol}(\tilde{\phi_L})
\end{equation}
and
\[n(Q,0)\leq \ell (Q)\leq m(Q,0).\]
Let $\delta$ be equal to the length of the longest diagonal of $\tilde{\phi_L}$. Thus $m(Q,0)\leq n(Q,\delta)$. This yields
\begin{equation}\label{eq:bounds}
n(Q,0)\leq \ell(Q)\leq m(Q,0)\leq n(Q,\delta).
\end{equation}
In textbook applications, one also normally writes $m(Q,-\delta)\leq n(Q,0)$. However, we are not able to do this here because our set $S_L(Q,\delta)$ is thin compared with the size of $\delta$ in the directions where $\sigma\not = 1$. This is the reason why we get an upper bound, but no lower bound.  

Combining equations \eqref{eq:vol} and \eqref{eq:bounds}, we obtain
\begin{equation}\label{eq:lbound}
\ell(Q)\leq \frac{\mathrm{Vol}(S(Q,\delta)) }{\mathrm{Vol}(\tilde{\phi_L})}.
\end{equation}
Notice that 
\begin{align*}
\mathrm{Vol}(S_L(Q,\delta))=&\int_{-Q-3-\delta}^\delta \int_{-4-\delta}^{4+\delta}\cdots \int_{-4-\delta}^{4+\delta} 2(\sqrt{-4x_1}+\delta)  \prod_{i=2}^h \left(6-\frac{x_i}{2}+2\delta\right) dx_1\dots dx_h\\=&(48+28\delta+4\delta^2)^{h-1}\frac{8}{3} Q^{3/2} + O_\delta(Q).
\end{align*}
Combining with \eqref{eq:lbound}, we arrive at the claimed expression for $\delta$, which is the maximal diagonal of $\tilde{\phi_L}$ and can therefore be bounded by twice the maximal diagonal of $\phi_L$. 

%Finally, we find a bound for $\delta$. Since $\delta$ is a diagonal of a parallelotope, it can be bounded by the sum of lengths of the different sides. In other words, if $\alpha_1,\dots, \alpha_h$ is an integral basis for $\mathfrak{o}_{L}$, then $\delta \leq \sum_{i,j}|\alpha_i^{\sigma_j}|$. 

\end{proof}

\section{Comments about other dimensions}

\subsection{} We can consider the two dimensional case using the previous work of Bolte \cite{Bolte} instead of Marklof. Let $\mathcal{O}$ be a non-compact arithmetic $2$-orbifold with associated group $\Gamma$. By \cite[Theorem~1.1]{EmeryRatcliffeTschantz}, for every hyperbolic element $\gamma\in \Gamma$ with $\lambda = e^{\ell(\gamma)}$, we have that $\lambda$ is a Salem number of degree $2$ (recall that the Salem numbers have even degree). It follows that a non-compact arithmetic hyperbolic $2$-orbifold generates $Q^{1/2}+O(1)$ different Salem numbers of degree $2$ that are less than or equal to $Q$.

On the arithmetic side, the counting here is very simple. For degree $2$, we have $m=0$, and the problem of counting Salem number reduces to counting irreducible polynomials of the form $x^2+ax+1 \in \Z[x]$ under the condition that $0<-a<Q+1$. It is also easy to see that the case $a=-1$ does not yield real roots, while $a=-2$ gives $\lambda=1$. It follows that the number of Salem numbers of degree $2$ less than or equal to $Q$ is $Q-2$.

So, in the logarithmic scale, a given non-compact arithmetic $2$-orbifold $\mathcal{O}$ generates asymptotically $1/2$ of all Salem numbers of degree $2$. 

\subsection{} Now consider an arbitrary dimension $n > 1$. By the prime geodesic theorem of Margulis \cite{Margulis} the number of geodesics of a compact hyperbolic $n$-manifold of length at most $\ell$ grows like $e^{(n-1)\ell}/(n-1)\ell$. If the geodesics all had distinct lengths, then in dimensions $2$ and $3$ we would have about $cQ$ different Salem numbers defined by a single arithmetic manifold $\mathcal{M}$. This is a much larger number than what we expect to obtain from Theorem~\ref{thm2}. The issue here is not in the compactness of $\mathcal{M}$ but rather it stems from the fact that the geodesic spectra of arithmetic manifolds tend to have large multiplicities. 

Determining the multiplicity of geodesics with a given length is known to be a very difficult problem. For example, Sarnak \cite{Sar82} studied this problem for the modular surfaces $\HH^2/ \Gamma$, where $\Gamma$ is a congruence subgroup of $\mathrm{PSL}(2,\Z)$,  in which case the multiplicities are the class numbers of indefinite binary quadratic forms. He used Selberg's trace formula to determine the asymptotic growth of their average sizes. Subsequent papers by Bolte and Marklof cited above treat the mean multiplicities in the spectra of $2$ and $3$-dimensional arithmetic orbifolds. Very little is known about multiplicities in higher dimensions. The relation between the geodesic spectrum and the distribution of Salem numbers that we investigate in this paper allows us to obtain many new multiplicity bounds. For example, we can now can prove Proposition~\ref{prop-mult}, which we restate below.

{
\renewcommand{\thethm}{\ref{prop-mult}}

\begin{prop}
Let $\mathcal{O}$ be a non-compact arithmetic hyperbolic orbifold of even dimension $n \ge 4$. Then the mean multiplicities in the length spectrum of $\mathcal{O}$ have a strong exponential growth rate of at least 
$$ \langle g(\ell) \rangle \sim c\frac{e^{(\frac{n}2-1)\ell}}{\ell}, \quad \ell\to\infty,$$
where $c$ is a positive constant.

\end{prop}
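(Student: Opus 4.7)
The plan is to combine the prime geodesic theorem with an upper bound on the number of \emph{distinct} closed geodesic lengths that comes from the Salem-number interpretation. For a non-compact arithmetic hyperbolic $n$-orbifold $\mathcal{O}$, let $\mathcal{N}(\ell)$ denote the number of closed geodesics of length at most $\ell$ counted with multiplicity, and let $\mathcal{N}_r(\ell)$ denote the number of distinct lengths at most $\ell$. The mean multiplicity at length $\ell$ is then essentially the quotient, and a lower bound $\langle g(\ell)\rangle \gtrsim \mathcal{N}(\ell)/\mathcal{N}_r(\ell)$ will follow from the standard definition as in Bolte, Marklof, and Sarnak.

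For $\mathcal{N}(\ell)$, I would cite the prime geodesic theorem of Margulis (or Gangolli--Warner in the finite-volume setting) which gives
\[ \mathcal{N}(\ell) \sim \frac{e^{(n-1)\ell}}{(n-1)\ell}, \qquad \ell \to\infty. \]
For $\mathcal{N}_r(\ell)$, the key input is Theorem~1.1 of Emery--Ratcliffe--Tschantz: when $n$ is even, any closed geodesic of length $\ell$ on $\mathcal{O}$ gives rise to a Salem number $\lambda = e^{\ell}$ of degree at most $n+1$. Since Salem numbers have even degree, the possible degrees are $2,4,\dots,n$. Hence
\[ \mathcal{N}_r(\ell) \le \sum_{m=0}^{\frac n2 - 1} \#\mathrm{Sal}_m(e^{\ell}). \]
Applying the G\"otze--Gusakova theorem (Theorem~\ref{thm:GG}) term by term, each summand is $\omega_m e^{(m+1)\ell} + O(e^{m\ell})$, so the dominant contribution comes from $m=\frac n2 - 1$, yielding
\[ \mathcal{N}_r(\ell) \le \omega_{\frac n2 - 1} e^{\frac n2 \ell}\bigl(1 + o(1)\bigr). \]

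Dividing the two estimates gives
\[ \frac{\mathcal{N}(\ell)}{\mathcal{N}_r(\ell)} \ge (1+o(1))\,\frac{1}{(n-1)\,\omega_{\frac n2 - 1}}\cdot\frac{e^{(\frac n2 - 1)\ell}}{\ell}, \]
which is the claimed lower bound for $\langle g(\ell)\rangle$ with the constant $c = \frac{1}{(n-1)\omega_{n/2-1}}$ (up to a factor depending on how one normalizes the mean multiplicity density). In dimensions $2$ and $3$ this recovers the previously known rates, and for $n\ge 4$ even it is new.

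The main obstacle, and the only point requiring care, is the step that passes from the two counting asymptotics to a genuine statement about \emph{mean} multiplicities in the sense of Bolte--Marklof rather than just a cumulative ratio: one must either argue with a smooth weight (cutting off at length $\approx \ell$ with a window), or pass to the derivatives of the counting functions, to get the density version $\langle g(\ell)\rangle$ rather than a ratio of cumulative counts. Both approaches are standard, but the step needs to be written out so that the $1/\ell$ factor matches exactly. A secondary technical issue is that the G\"otze--Gusakova asymptotic is stated for fixed $m$; since here $m$ is fixed by the dimension of $\mathcal{O}$, this is no problem, but I would pause to confirm that the implicit constants do not interact with the sum over smaller degrees in a way that spoils the bound --- they do not, because the exponent $m+1$ is strictly smaller for $m < \frac n2 - 1$.
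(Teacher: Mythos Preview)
Your proposal is correct and follows essentially the same approach as the paper: prime geodesic theorem (via Gangolli--Warner, since $\mathcal{O}$ is non-compact) for $\mathcal{N}(\ell)$, the Emery--Ratcliffe--Tschantz correspondence to bound $\mathcal{N}_r(\ell)$ by a count of Salem numbers of degree $\le n$, and G\"otze--Gusakova for that count. Your write-up is in fact more detailed than the paper's short sketch --- in particular you make the sum over degrees explicit and flag the passage from cumulative counts to the density $\langle g(\ell)\rangle$, a point the paper glosses over.
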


\addtocounter{thm}{-1}

}

\begin{proof}
The number of closed geodesics of $\mathcal{O}$ of length at most $\ell$ is $\sim e^{(n-1)\ell}/(n-1)\ell$. Note that $\mathcal{O}$ is not compact and has singularities so we cannot apply Margulis' theorem; we refer instead to \cite[Porposition~5.4]{GanWar} where the result is obtained in this setting. Now by \cite[Theorem~1.1]{EmeryRatcliffeTschantz} each geodesic corresponds to a Salem number $e^\ell$ of degree $\le n$ (here we use that $n$ is even and that the degrees of the Salem numbers are even). By \cite[Theorem~1.1]{GoetzeGusakova} the total number of such Salem numbers is bounded by $c e^{(n/2)\ell}$. Hence, on average, the geodesic lengths have to appear with multiplicity at least  $\sim e^{(n/2-1)\ell}/(n-1)\ell$.
\end{proof}

Extending this result to compact orbifolds and to odd dimensions requires finer counting of square-rootable Salem numbers and \emph{relative Salem numbers}, i.e., those $\lambda$ for which $\Q(\lambda+\lambda^{-1})$ contains a fixed field $L$. We leave these intriguing problems for future research.  It would also be interesting to find the proportion of Salem numbers defined by a given arithmetic manifold or orbifold. At this point, we cannot discount the possibility that in large dimensions the exponent of the growth function of such Salem numbers is the same as the exponent in the total growth of the admissible Salem numbers.

\subsection*{Acknowledgements.} The authors would like to thank Chris Smyth for helpful comments on an earlier draft of this manuscript. This project was started at the BIRS-CMO workshop ``Number Theory in the Americas'' in Oaxaca, Mexico. We sincerely thank the organizers of the workshop, and the Casa Matem\'{a}tica Oaxaca, for making it possible for us to work together. 

MB is partially supported by the CNPq, FAPERJ and by the Max Planck Institute for Mathematics.  ML is partially supported by the Natural Sciences and Engineering Research Council of Canada (NSERC) Discovery Grant 355412-2013 and by the Fonds de recherche du Qu\' ebec -- Nature et technologies (FRQNT) Projet de recherche en \'equipe 256442. PM was supported by the KIAS Individual Grant MG072601 at the Korea Institute for Advanced Study. LT was supported by the Max Planck Institute for Mathematics during her sabbatical in the 2019-2020 academic year, and by Oberlin College during the early stages of this paper. 

\bibliographystyle{amsalpha}

\bibliography{Bibliography}

\end{document}